\newtheorem{thm}{Theorem}[section]
\newtheorem{lmm}[thm]{Lemma}
\theoremstyle{definition}
\newtheorem{rem}[thm]{Remark}
\newtheorem{CC}[thm]{Consistency checks}
\numberwithin{equation}{section}
\providecommand \xii{\mathcal{L}}
\providecommand \yii{\mathbb{L}}
\providecommand \B{\mathrm{B}}
\def \hf{\hspace*{0.5cm}}
\begin{document}

\title[Genus two enumerative invariants in del-Pezzo surfaces]{Genus two
enumerative invariants in del-Pezzo surfaces with a fixed complex structure}

\author[I. Biswas]{Indranil Biswas}

\address{School of Mathematics,
Tata Institute of fundamental research, Homi Bhabha road, Mumbai 400005, India}

\email{indranil@math.tifr.res.in}

\author[R. Mukherjee]{Ritwik Mukherjee}

\address{School of Mathematics, National Institute of Science Education and Research, Jatni, Odisha 752050, India}

\email{ritwikm@niser.ac.in}

\author[V. Thakre]{Varun Thakre}

\address{International Centre for Theoretical Sciences, Bengaluru, Karnataka 560089, India}

\email{varun.thakre@icts.res.in}

\subjclass[2010]{53D45, 14N35, 14J45}

\keywords{Genus two curves, del-Pezzo surface, symplectic invariant, enumerative 
invariant.}

\date{}


\begin{abstract}
We obtain a formula for the number of genus two curves with a fixed 
complex structure of a given degree on a del-Pezzo surface that pass 
through an appropriate number of generic points of the surface.
This is done by extending the symplectic approach of Aleksey Zinger.
This enumerative problem is expressed as the difference between the 
symplectic invariant and an intersection number on the moduli space of 
rational curves on the surface.
\end{abstract}

\maketitle

\section{Introduction}\label{introduction}

Enumerative Geometry of rational curves in $\mathbb{P}^2_{\mathbb C}$ is a classical 
question in algebraic geometry. A natural generalization of it is to ask how many 
genus $g$ curves with a fixed complex structure
are there of a given degree that pass through 
the right number of generic points. In \cite{Rahul_genus_1} and \cite{ionel_genus1} 
using methods of algebraic and symplectic geometry respectively, Pandharipande and 
Ionel obtain the following result: 

\begin{thm}[{R.Pandharipande and E.Ionel}]\label{qu}
There is an explicit formula for the number of 
degree $d$ genus one curves with a fixed complex structure 
in $\mathbb{P}^2_{\mathbb{C}}$ that pass through 
$3d-1$ generic points. 
\end{thm}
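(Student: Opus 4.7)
The plan is to follow Ionel's symplectic approach, which parallels Pandharipande's algebraic one. The desired enumerative count is not itself a Gromov-Witten type invariant: the moduli space of genus one stable maps carries boundary strata with contracted elliptic components, and these contribute spuriously to any naive integration over the virtual class. The strategy is therefore to (i) define a symplectic invariant $\mathrm{SI}_{1,j}(d)$ whose virtual dimension matches the problem and which agrees with the enumerative count $N_{1,j}(d)$ on the smooth locus, and (ii) compute the spurious boundary contribution as an intersection number on the moduli space of rational stable maps, subtracting it off to isolate the enumerative number.

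To carry out (i), with the $j$-invariant fixed, form a Ruan-Tian inhomogeneous perturbation of the Cauchy-Riemann equation on $\overline{\mathcal{M}}_{1,3d-1}(\mathbb{P}^2,d)$, impose the $3d-1$ point incidence conditions via the evaluation maps at the marked points, and cut by the codimension-one condition that the $j$-invariant of the domain equal the prescribed value. A virtual dimension count shows this yields a finite signed count $\mathrm{SI}_{1,j}(d)$, invariant under generic deformations of the perturbation and the incidence points. For (ii), analyze the boundary strata of the compactified moduli space. The dominant contribution comes from the stratum of nodal stable maps consisting of a degree $d$ rational component $f\colon \mathbb{P}^1 \to \mathbb{P}^2$ carrying all $3d-1$ marked points, with a contracted elliptic tail of $j$-invariant $j$ attached at a single node; following Zinger's transversality and gluing analysis for genus one, one verifies that every other boundary stratum has excess codimension under generic perturbation and so does not contribute.

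The principal boundary contribution is then computed as an integral over $\overline{\mathcal{M}}_{0,3d}(\mathbb{P}^2,d)$ of the Euler class of an explicit obstruction bundle (arising from linearizing the Cauchy-Riemann operator at the ghost configuration), paired with the $3d-1$ point constraints and a $\psi$-class at the attaching marked point. This integral is evaluable by the standard recursion among rational Gromov-Witten invariants of $\mathbb{P}^2$. Denoting the resulting intersection number by $C(d)$, one obtains
\[
N_{1,j}(d) \;=\; \mathrm{SI}_{1,j}(d) \,-\, C(d),
\]
with both terms on the right explicit, giving the desired formula.

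The main obstacle is the last step: proving that only the principal boundary stratum contributes, and identifying the correct obstruction bundle whose Euler class yields a clean intersection-theoretic correction term. This requires a delicate analytic study of the local structure of the moduli space near the ghost locus -- transversality of the linearized operator, a gluing parametrization of nearby smooth maps, and identification of the resulting cokernel as a geometric bundle on the boundary stratum. This is the technical core of the Ionel/Zinger theory, and it is precisely this analysis that the present paper will extend to genus two.
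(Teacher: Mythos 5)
This theorem is stated in the paper as a known result of Pandharipande and Ionel, with citations to \cite{Rahul_genus_1} and \cite{ionel_genus1}; the paper offers no proof of its own. Your sketch is a faithful high-level account of Ionel's symplectic route (the one the present paper later generalizes to genus two): express the Ruan--Tian invariant as the enumerative count weighted by automorphisms plus a boundary correction, identify the contributing boundary stratum as a ghost genus-one component with a degree-$d$ rational bubble, and evaluate the correction as an intersection number (a $\psi$-class paired with the point constraints) on the moduli of rational maps. One small mismatch of language: in the Ruan--Tian setup used here and by Ionel, one fixes the Riemann surface $(\Sigma_1,j)$ at the outset and perturbs the $\bar\partial$-equation on maps $\Sigma_1\to\mathbb{P}^2$, rather than carving out a fixed-$j$ divisor inside $\overline{\mathcal{M}}_{1,n}(\mathbb{P}^2,d)$; the two formulations are equivalent, but the fixed-domain picture is what makes the ghost-plus-bubble boundary analysis clean. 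You are correct that identifying the contributing strata and the obstruction bundle is the technical core, and that this analysis (Ionel's, later systematized by Zinger) is exactly what this paper extends.
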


In \cite{KQR} and \cite{g2p2and3} 
by extending the method of Pandharipande and Ionel respectively, 
Katz, Qin and Ruan and 
Zinger obtain the following results: 

\begin{thm}[{S.Katz, Z.Qin, Y.Ruan and A. Zinger}]\label{qu_genus_two}
There is an explicit formula for the number of 
degree $d$ genus two curves with a fixed complex structure 
in $\mathbb{P}^2_{\mathbb{C}}$ that pass through 
$3d-2$ generic points.
\end{thm}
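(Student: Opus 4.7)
The plan is to follow the symplectic deformation approach pioneered by Ionel and extended to genus two by Zinger. Fix a smooth Riemann surface $(\Sigma, j)$ of genus two and, for a generic almost complex structure $J$ on $\mathbb{P}^2_{\mathbb{C}}$ that is a small perturbation of the standard integrable structure $J_0$, consider the moduli space of simple $J$-holomorphic maps $u : \Sigma \to \mathbb{P}^2_{\mathbb{C}}$ of degree $d$. An index computation with $c_1(T\mathbb{P}^2_{\mathbb{C}}) = 3H$ gives this moduli space complex dimension
\[
\int_d c_1(T\mathbb{P}^2_{\mathbb{C}}) + (\dim \mathbb{P}^2_{\mathbb{C}}-3)(1-g) - (3g-3) \;=\; 3d-2,
\]
which matches the expected number of point incidence conditions. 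Cutting down by $3d-2$ generic points yields a signed count $\mathrm{SymInv}_d$.

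First I would establish $\mathrm{SymInv}_d$ as a well-defined symplectic invariant using Ruan--Tian-type transversality: for a generic perturbation $J$ the moduli space is a smooth zero-dimensional manifold whose signed count is independent of $J$ and of the chosen points. Because $J$ is typically non-integrable, the only $J$-holomorphic maps from $(\Sigma, j)$ are honest immersions onto genus two curves, but this count is done in the wrong complex structure: the actual enumerative number $N_d^{\mathrm{enum}}$ we want lives at $J = J_0$.

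The core of the argument is the comparison $\mathrm{SymInv}_d = N_d^{\mathrm{enum}} + \mathrm{CR}_d$, where $\mathrm{CR}_d$ is a \emph{boundary} or \emph{ghost} contribution. Taking the limit $J \to J_0$ and applying Gromov compactness, the $J$-holomorphic maps converge to $J_0$-holomorphic stable maps of two types: (i) honest maps whose image is a genus-two curve of degree $d$ through the $3d-2$ points, contributing $N_d^{\mathrm{enum}}$; and (ii) \emph{ghost-bubble configurations} in which a sub-curve of arithmetic genus two is contracted to a point lying on a degree-$d$ rational curve in $\mathbb{P}^2_{\mathbb{C}}$ passing through the $3d-2$ points. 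Each ghost stratum contributes to $\mathrm{SymInv}_d$ via a local obstruction bundle whose rank equals the dimension of the cokernel of the linearized Cauchy--Riemann operator along the contracted component. Following Zinger, these obstruction bundles can be identified with tensor powers of the tangent line to the image rational curve at the attachment node, twisted by Hodge-theoretic data on the ghost, and therefore $\mathrm{CR}_d$ becomes an explicit intersection number on $\overline{M}_{0,1}(\mathbb{P}^2_{\mathbb{C}}, d)$. This intersection number can in turn be computed in terms of the genus zero Gromov--Witten invariants of $\mathbb{P}^2_{\mathbb{C}}$, yielding the claimed explicit formula for $N_d^{\mathrm{enum}} = \mathrm{SymInv}_d - \mathrm{CR}_d$.

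The main obstacle is the analysis of the ghost stratum. In genus two there are substantially more degeneration types than in the genus one case treated by Ionel and Pandharipande: a single contracted genus-two ghost, two elliptic tails attached to the rational component, a genus-two ghost attached with varying tangency orders, and rational images with cusps or tacnodes must all be enumerated. Identifying the obstruction bundle on each stratum, getting the symmetry factors and multiplicities right, and reducing the resulting cohomology class on $\overline{M}_{0,1}(\mathbb{P}^2_{\mathbb{C}}, d)$ to a combination of computable tautological classes is where the subtlety lies; in our setting the need to extend this analysis to an arbitrary del-Pezzo surface in place of $\mathbb{P}^2_{\mathbb{C}}$ is what makes the generalization non-trivial.
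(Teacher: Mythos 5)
Your high-level strategy --- introduce a Ruan--Tian perturbed symplectic count, compare it to the enumerative count via a correction term, and compute that correction as intersection numbers over the moduli of genus-zero stable maps --- is exactly Zinger's symplectic proof, which is the route the paper cites and later extends to del-Pezzo surfaces. But two of your specifics are wrong in a way that would derail the computation.

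The comparison identity carries an automorphism factor: the correct relation is $\mathrm{RT}_{2,d} = |\textnormal{Aut}(\Sigma_2^j)|\, n_{2,d}^j + \mathrm{CR}_{2,d}$, since the perturbed moduli space is not quotiented by $\textnormal{Aut}(\Sigma,j)$; your $N_d^{\mathrm{enum}} = \mathrm{SymInv}_d - \mathrm{CR}_d$ drops this factor (generically $2$ in genus two) and so would give the wrong number. More seriously, your list of ghost degenerations is incorrect. Because the complex structure $j$ on $\Sigma_2$ is \emph{fixed}, the domain in any Gromov limit is $\Sigma_2$ with a tree of spheres attached --- $\Sigma_2$ itself never degenerates --- so there are no ``two elliptic tails'' strata, and Zinger's analysis does not involve tacnodal rational images. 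A dimension count rules out multiple covers, three or more spheres on the ghost, and chains of spheres; what remains is a stratification by (a) one or two spheres attached to the ghost $\Sigma_2$, (b) whether the sphere-map has $du=0$ at the node, and (c) whether the attachment point on $\Sigma_2$ is one of the six Weierstrass points. These produce the four strata $\mathcal{B}_1^1$, $\mathcal{B}_1^2$, $\mathcal{B}_1^3$, $\mathcal{B}_2^1$ whose obstruction-bundle contributions give $\mathrm{CR}_{2,d} = n_1^1 + 2n_1^2 + 18 n_1^3 + n_2^1$; working from your proposed list of strata would miscount.
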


In \cite{IBRMVT}, we extended Theorem \ref{qu} to del-Pezzo surface. 
Our aim here is to extend Theorem \ref{qu_genus_two} for del-Pezzo surfaces. 

Let $X$ be a complex del-Pezzo surface and $\beta \,\in\, H_2(X,\, \mathbb{Z})$
a given homology class. Let $n_{g, \beta}^j$ denote the number of 
genus $g$ curves with a fixed complex structure that pass through the 
right number of generic points. For notational convenience, the number
$n_{0, \beta}^j$ will be denoted by $n_{0, \beta}$. Given a (co)homology 
class $\alpha$ in $X$, we will denote its Poincar\'{e} dual in $X$ 
by $\hat{\alpha}$. We prove the following:

\begin{thm}\label{main_thm}
Let $X$ 
be a complex del-Pezzo surface and $\beta \,\in\, H_2(X,\, \mathbb{Z})$
a given homology class. 
Denote 
\[ x_i := c_i(TX), \qquad \textnormal{and} \qquad \delta_{\beta} := \langle x_1, ~\beta \rangle -1, \] 
where $c_i$ denotes the $i^{\textnormal{th}}$ Chern class. 
Let
$n_{2, \beta}^{j}$ denote 
the number of 
genus two curves with fixed complex structure $j$ of degree $\beta$
in $X$ that pass through $\delta_{\beta} -1$ generic points. 
Let us make the following assumptions on $\beta$. 
If $X$ is $\mathbb{P}^2$ blown up at $k\leq 8$ points, then suppose  
\[ \beta := d L - m_1 E_1 -\ldots -m_k E_k\] 
and assume that 
$n_{0, \beta -3L} \,>\,0$ and $d >2$ 
(here $L$ denotes the homology class of a line and $E_i$ denotes the exceptional divisors).
If $X \,:=\, \mathbb{P}^1 \times \mathbb{P}^1$, then 
suppose 
\[ \beta := a e_1 + b e_2 \] 
and assume that $a$ and $b$ are both greater than $2$ 
(here $e_1$ and $e_2$ denote the classes of 
$[\mathbb{P}^1] \times [\textnormal{pt}]$ and $[\textnormal{pt}]\times [\mathbb{P}^1]$ in $\mathbb{P}^1 \times \mathbb{P}^1$). 
Then,
\begin{align}
\frac{|\textnormal{Aut}(\Sigma_2^j)|}{2}n^{j}_{2, \beta} &=
n_{0, \beta} \Big((2 + b_2(X)) \beta^2 - 10 x_2([X]) - \hat{x}_1^2 + 
\frac{12 \hat{x}_1^2}{\beta \cdot \hat{x}_1}\Big) \nonumber \\ 
& + \sum_{\substack{\beta_1+ \beta_2= \beta, \\ \beta_1, \beta_2 \neq 0}} \binom{\delta_{\beta}-1}{\delta_{\beta_1}}
n_{0, \beta_1} n_{0, \beta_2} (\beta_1 \cdot \beta_2) \Big(
-\frac{6(\beta_1 \cdot \hat{x}_1) (\beta_2 \cdot \hat{x}_1)}{(\beta \cdot \hat{x}_1)} +\frac{\beta_1^2 \beta_2^2 }{2} + 10 \Big) 
\label{main_formula_blwo_up}
\end{align}
where 
$|\textnormal{Aut}(\Sigma_2^j) |$ is the order of the group of holomorphic automorphisms 
of a genus two Riemann surface with fixed complex structure $j$, $b_2(X)$ denotes the 
second Betti number of $X$ 
and ``$\cdot$'' denotes topological intersection. 
\end{thm}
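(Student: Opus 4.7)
The plan is to adapt the symplectic approach of Zinger \cite{g2p2and3} from $\mathbb{P}^{2}$ to an arbitrary del-Pezzo surface, in the spirit of our earlier treatment of the genus one case in \cite{IBRMVT}. One introduces a $j$-fixed genus two symplectic invariant, defined as the signed count of $(j,J)$-holomorphic maps $u \colon (\Sigma_{2}, j) \to (X, J)$ with $u_{*}[\Sigma_{2}] = \beta$ passing through $\delta_{\beta} - 1$ generic points, for a generic compatible almost complex structure $J$ on $X$. Since this count is $J$-independent, we may degenerate $J$ to the integrable structure; the limit decomposes into the honest enumerative count $n^{j}_{2,\beta}$ and an ``excess'' contribution supported on boundary strata of the Gromov-compactified moduli space where the domain becomes reducible or acquires a ghost component. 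The required identity \eqref{main_formula_blwo_up} is then the rearrangement ``honest count equals symplectic count minus excess''.

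The main body of the proof is the identification and computation of the excess intersection. Two families of degenerations contribute. First, an irreducible rational map of class $\beta$ may carry a genus two ghost bubble collapsed to a point of the image; this family contributes a multiple of $n_{0,\beta}$, and its coefficient is an Euler integral over $\overline{\mathcal{M}}_{0,1}(X,\beta)$ whose integrand is built from $c_{1}(TX)$, $c_{2}(TX)$, the $\psi$-class at the marked point, and the evaluation morphism. Pushing these classes forward to a point produces the four terms $(2 + b_{2}(X))\beta^{2}$, $-10\, x_{2}([X])$, $-\hat{x}_{1}^{2}$ and $12\hat{x}_{1}^{2}/(\beta \cdot \hat{x}_{1})$ visible in the first line of \eqref{main_formula_blwo_up}. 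Second, a reducible configuration $C_{1} \cup C_{2}$ with $[C_{i}] = \beta_{i}$, $\beta_{1} + \beta_{2} = \beta$, may produce a genus two stable domain at the node, contributing an Euler integral over the fiber product $\overline{\mathcal{M}}_{0,*}(X,\beta_{1}) \times_{X} \overline{\mathcal{M}}_{0,*}(X,\beta_{2})$. The binomial $\binom{\delta_{\beta}-1}{\delta_{\beta_{1}}}$ partitions the $\delta_{\beta}-1 = \delta_{\beta_{1}} + \delta_{\beta_{2}}$ point constraints between the two components, and the factor $\beta_{1} \cdot \beta_{2}$ counts the nodal intersection points. The remaining numerical coefficients in the reducible term again arise from pushing forward Chern class data.

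The prefactor $|\mathrm{Aut}(\Sigma_{2}^{j})|/2$ on the left of \eqref{main_formula_blwo_up} appears because a genus two Riemann surface is hyperelliptic: the $j$-fixed symplectic count is implicitly taken modulo the hyperelliptic involution, forcing a factor of two in the correspondence between parametrised maps and geometric curves. The genericity hypotheses $d > 2$ (respectively $a, b > 2$) and $n_{0, \beta - 3L} > 0$ are used to exclude higher-codimension boundary strata (three or more rational components, or cuspidal configurations of very low degree) and to ensure that the rational moduli spaces appearing above have the expected dimension.

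The main obstacle is the accurate computation of the Euler classes of the obstruction bundles on the excess strata, especially on the reducible strata where the bundle does not split across the two factors of the fiber product. For $X = \mathbb{P}^{2}$ these computations simplify because $TX$ is homogeneous, but for a general del-Pezzo surface one must carry along the full Chern data of $TX$, which is the origin of the $b_{2}(X)$, $x_{2}([X])$, $\hat{x}_{1}^{2}$ and $(\beta_{i} \cdot \hat{x}_{1})$ dependencies in the final formula. A sanity check is provided by specializing to $\mathbb{P}^{2}$ (so $b_{2}(X) = 1$, $\hat{x}_{1} = 3L$, $x_{2}([X]) = 3$) and verifying that \eqref{main_formula_blwo_up} reproduces Theorem \ref{qu_genus_two}.
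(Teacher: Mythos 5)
Your strategic outline is in the right neighborhood, but several load-bearing ideas from the paper's argument are missing or misattributed, and without them the coefficients in \eqref{main_formula_blwo_up} cannot be derived.

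First, the symplectic side of the identity is not a boundary Euler integral. The paper works with the Ruan--Tian invariant, defined by fixing the integrable $J$ and the domain complex structure $j$ and perturbing the equation by a generic $(0,1)$-form $\nu$; the invariant $\mathrm{RT}_{2,\beta}$ is then evaluated via the Ruan--Tian composition laws, which reduce genus two with no constraints to genus zero with four diagonal insertions (equations \eqref{rt_2_eqn_one}--\eqref{rt_blow_up}). The term $(2+b_2(X))\beta^2 n_{0,\beta}$ in the first line of \eqref{main_formula_blwo_up} is precisely $\tfrac{1}{2}\mathrm{RT}_{2,\beta}$ from this computation; it does not come from any obstruction-bundle pushforward. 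Your proposal collapses the $\mathrm{RT}$ computation and the correction term into one ``excess'' integral, which would not reproduce the $b_2(X)$ dependence. Relatedly, your framework of ``generic compatible $J$'' with fixed $j$ is not the Ruan--Tian set-up; for genus $\ge 1$ with a ghost principal component, generic $J$ alone does not achieve transversality, which is exactly why the $\nu$-perturbation is needed.

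Second, the one-sphere boundary stratum must be split into three pieces before its contribution can be computed: (i) the sphere is immersed at the attaching point, giving $n^1_1$; (ii) the sphere has a cusp at the attaching point and the attaching point on $\Sigma$ is not one of the six Weierstrass points, giving multiplicity $2$ and hence $2n^2_1$; (iii) the sphere is cuspidal and the attaching point on $\Sigma$ is a Weierstrass point, giving multiplicity $3$ at each of the six Weierstrass points and hence $18n^3_1$. These are Lemmas \ref{lm1}--\ref{lm3}, and the cuspidal contributions rely on the count $|\mathcal{S}_1(\mu)|$ of rational cuspidal curves from \cite{IB_SDM_RM_VP}. Your single Euler integral over $\overline{\mathcal{M}}_{0,1}(X,\beta)$ cannot capture the Weierstrass-point multiplicities, and those are what produce the coefficient $-10\,x_2([X])$ (and the corresponding $+10$ in the second line). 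Finally, the hypotheses $d>2$, $a,b>2$ and $n_{0,\beta-3L}>0$ are not used to exclude higher-codimension boundary strata; those are ruled out by the dimension counts in Lemmas \ref{bd3} and \ref{bd4}. The degree hypotheses ensure $H^1(\Sigma_2, u^*TX)=0$ (genus-two regularity, Lemma \ref{dp_regular_g2} and Lemma \ref{p1xp1_g2_regular}), and $n_{0,\beta-3L}>0$ is a transversality hypothesis inherited from the cuspidal-curve count in \cite{IB_SDM_RM_VP}. The factor $|\mathrm{Aut}(\Sigma_2^j)|$ likewise arises because the Ruan--Tian count does not quotient by domain automorphisms, not from an implicit hyperelliptic quotient.
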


\begin{rem}
The number $\delta_{\beta}-1 := \langle x_1, \beta \rangle -2$ is the index of the linearization of 
$\overline{\partial}$ operator; this is explained in \cite[Pages 41 and 42]{McSa}. 
This is the ``expected'' dimension of the moduli space of genus $2$, degree $\beta$ curves. 
Hence, if the 
complex structure on the target space is regular (i.e. the linearization of the $\overline{\partial}$ operator is surjective) 
then the actual dimension of the moduli space is same as the expected dimension. Since passing through each generic point cuts the 
dimension of the moduli space by one, we conclude that $\delta_{\beta}-1$ are the ``right'' number of generic points to get a 
finite number. In section \ref{genus_two_regular} we show that the complex structure on del-Pezzo surfaces are genus two regular 
(provided $\beta$ satisfies the restrictions we have imposed).
\end{rem}

\begin{rem}
\label{rem_consistency_check}
We expect that \eqref{main_formula_blwo_up} is valid even when 
the conditions we imposed on $\beta$ are not satisfied. 
When $X$ is $\mathbb{P}^2$ blown up at a certain number of points, 
we imposed the condition $n_{0, \beta-3L} >0$ so that we can use 
our formula in \cite{IB_SDM_RM_VP} 
for the number of rational cuspidal curves on del-Pezzo surfaces 
through $\delta_{\beta}-1$ generic points. However, as we explain in the
introduction of \cite{IB_SDM_RM_VP}, we expect that formula  
to hold even when $n_{0, \beta-3L} =0$. The condition $n_{0, \beta-3L} > 0$ was imposed to prove a 
transversality result (\cite[Lemma 6.2, Page 14]{IB_SDM_RM_VP}); that condition is a sufficient condition to prove transversality, it may not be 
necessary. The condition on $d >2$ is imposed to ensure that the complex structure on the del-Pezzo surface 
is genus two regular (as proved in section \ref{genus_two_regular}). \\ 
\hf \hf When $X$ is $\mathbb{P}^1 \times \mathbb{P}^1$, we use the formula 
obtained by Kock \cite{JKoch2} for the number of rational cuspidal curves on $\mathbb{P}^1 \times \mathbb{P}^1$
through $\delta_{\beta}-1$ generic points. There are no assumptions on $\beta$ required to use Kock's formula. 
However, we need to impose the condition $a$ and $b$ are greater than two to ensure that 
the complex structure is genus two regular (as proved in section \ref{genus_two_regular}). 
However, based on the numerical evidence, we expect that the formula  
\eqref{main_formula_blwo_up} is unconditionally true. The conditions we impose on $\beta$ is a sufficient 
condition to prove the necessary transversality/regularity results; they may not be necessary. 
\end{rem}

In \cite{KoMa} and \cite{Pandh_Gott}
a recursive formula is given to compute the numbers $n_{0, \beta}$ 
for del-Pezzo surfaces. Hence, Theorem \ref{main_thm} 
gives $n_{2, \beta}^j$ explicitly. 
We have written a
$\mbox{{\bf C}++}$ 
program to implement the formula in 
Theorem \ref{main_thm} when $X$ is $\mathbb{P}^2$ blown up at $k \leq 8$ points 
and compute $n_{2, \beta}^j$; the program is available in
the web-page 
\[ \textnormal{\url{https://www.sites.google.com/site/ritwik371/home}} \]

\begin{CC} 
We will now present two consistency checks; the second one does not 
quite apply since the restriction we impose on $\beta$ is not satisfied in the second case. \\
\hf \hf Let $X$ be $\mathbb{P}^2$ blown up at 
$k \leq 8$ points. We claim that 
\[ n_{2, d L + \sigma_1 E_1 + \ldots +\sigma_r E_r}^j\,
=\, n^j_{2, dL + \sigma_1 E_1 + \ldots + \sigma_{r-1}E_{r-1}}, \] 
if each of the $\sigma_i$ is $-1$ or $0$. 
Let us explain why this is so. Let us take $X$ to be $\mathbb{P}^2$ blown up at one point. Let us call that point $p$. 
Let us consider the number $n^j_{2, dL -E_1}$; this is the number of genus two curves in $X$ representing the 
class $dL-E_1$ and passing through $3d-3$ generic points. Let $\mathcal{C}$ be one of the curves 
counted by the above number. The curve $\mathcal{C}$ intersects the exceptional divisor exactly at one point. 
Furthermore, since the $3d-3$ points are generic, they can be chosen not to lie in the 
exceptional divisor; let us call the points $p_1, p_2, \ldots, p_{3d-3}$. 
Hence, when we consider the blow down from $X$ to $\mathbb{P}^2$, the curve $\mathcal{C}$ becomes a curve in $\mathbb{P}^2$ passing through 
$p_1, p_2, \ldots, p_{3d-3}$ and the blow up point $p$. Hence, we get a genus two, degree $d$ curve in $\mathbb{P}^2$ passing through 
$3d-2$ points. Hence, there is a one to one correspondence between curves representing the class $dL-E_1$ in $X$ passing through $3d-3$ points 
and degree $d$ curves in $\mathbb{P}^2$ passing through $3d-2$ points. Hence $n^j_{2, dL-E_1} = n^j_{2,dL}$. A similar argument holds when there are 
more than one blowup points. This argument also shows that $n^j_{2, dL+0E_1} = n^j_{2, dL}$; the same reasoning holds by taking a 
curve in the blowup and then considering its image under the blow down. The blow down gives a one to one correspondence between the two sets and 
hence, the corresponding numbers are the same. \\ 
\hf \hf We have verified this assertion in many cases. For instance 
we have verified that 
$$
n_{2, 4L -E}^j \, =\, n_{2, 4L+0E}^j \,=\, n_{2, 4L}^j\, . 
$$
The reader is invited to use our program and verify these assertions. \\ 
\hf \hf The second consistency check we will present is actually not valid, because 
the restrictions we impose on $\beta$ are not going to be satisfied. However, as 
we explained earlier, the restrictions imposed on $\beta$ are sufficient conditions for our formula 
to hold, they may not be necessary. We expect our formula to hold without any restrictions on $\beta$ 
based on this numerical evidence we present. \\ 
\hf \hf Let $g_{\beta}$ denote the genus of smooth degree $\beta$ curve in $X$. 
Then, 
$$g_{\beta} \,:= \,\frac{\beta^2 - \hat{x}_1 \cdot \beta +2}{2}\, .$$ 
Hence, if $g_{\beta} =0$ 
or $1$ then $n_{2, \beta}^j$ ought to be zero. 
We have verified this assertion for several values of $\beta$ using our program. 
For instance, we have verified that if $X$ is $\mathbb{P}^2$ blown up at 
$k \leq 8$ points and $\beta$ is any one of the following 
homology classes, 
\[ (1,0),\ (2,0),\ (3,0),\ (1,-1),\ (2,-1),\ (3, -1),\ (4, -2, -2) \qquad \textnormal{or} \qquad (4,-2,-2,-2) \] 
then $n_{2, \beta}^j$ is indeed zero (as expected). 
However, the hypothesis $n_{0, \beta-3L} > 0$ doesn't hold in these cases; hence  
this consistency check is not quite applicable. \\
\hf \hf Next, we can directly verify that when $X\,:= \,\mathbb{P}^1 \times \mathbb{P}^1$, 
$n_{2, \beta}^j$ is zero if $\beta$ is of bi-degree $(a, 0)$, $(a,1)$ (no restriction
on $a$) for some $a$ or if $\beta \,:=\, (2,2)$. In order to do that we directly 
substitute all the values in equation \eqref{main_formula_blwo_up} and compute $n_{2, \beta}^j$, 
which gives us zero in all the cases we have mentioned.\\
\hf \hf This is as expected since 
$g_{\beta}$ is negative, zero or one respectively in those cases. 
Hence $n_{2, \beta}^j$ should be zero in those cases. Again, this consistency check is not quite valid, since 
the hypothesis $a>2$ and $b>2$ is not satisfied.
\end{CC}

\begin{rem}
In \cite{ZingKQR}, Zinger corrects an error in \cite{KQR} and 
obtains the same formula as in \cite{g2p2and3}. 
The method presented in this paper is an extension of the symplectic approach
employed in \cite{g2p2and3}; it would be interesting to see if the algebro-geometric 
method presented in \cite{KQR} and \cite{ZingKQR} can be extended to obtain our 
formula \eqref{main_formula_blwo_up}.
\end{rem}

\begin{rem}
\label{rem_zinger_spl_case}
Let us now explain how we can recover the formula of Zinger in Theorem $1.1$ of \cite{g2p2and3}. 
First, let us recall the formula for the number of rational degree $d$ curves in $\mathbb{P}^2$ through 
$3d-1$ points obtained by Ruan-Tian and Kontsevich-Manin (\cite[page 363]{RT} and \cite{KoMa})
\begin{align}
n_{0,d} & = \frac{1}{6(d-1)} \sum_{d_1 + d_2 = d} 
\binom{3d-2}{3d_1-1} d_1 d_2 n_{0, d_1} n_{0, d_2} \Big( d_1 d_2 - 2 \frac{(d_1-d_2)^2}{3d-2} \Big). \label{n_d_rt} 
\end{align}
The above formula is only for $\mathbb{P}^2$ (and not other del-Pezzo surfaces).
Let us now write down equation \eqref{main_formula_blwo_up} 
with $X:=\, \mathbb{P}^2$ and $|\textnormal{Aut}(\Sigma_2^j)| \,=\, |\mathbb{Z}_2| = 2$; that gives us 
\begin{align}
n_{2,d}^j &= 3 (d^2-1) n_{0,d} + n_{0,d} \Big( -36 + \frac{36}{d} \Big) + \sum_{d_1+ d_2 = d} \binom{3d-2}{3d_1-1} 
n_{0,d_1} n_{0, d_2} d_1 d_2 
\Big( -\frac{18 d_1 d_2}{d} + \frac{d_1^2 d_2^2}{2} + 10 \Big). \label{n_2d_zinger_intermediate}
\end{align}
Now substitute the value of $n_{0,d}$ from equation \eqref{n_d_rt} in \eqref{n_2d_zinger_intermediate} 
only in the term $n_{0,d}(-36 + \frac{36}{d})$; keep the first term unchanged. That gives us 
Zinger's formula in Theorem $1.1$ of \cite{g2p2and3}, namely 
\begin{align*}
n_{2,d}^j &= 3 (d^2-1) n_{0,d} + \frac{1}{2} 
\sum_{d_1 + d_2 = d} \binom{3d-2}{3d_1-1} d_1 d_2 n_{0, d_1} n_{0, d_2}
\Big( d_1^2 d_2^2 + 28 -16 \frac{9 d_1 d_2 -1}{3d-2} \Big). 
\end{align*}
The reason we set 
$\textnormal{Aut}(\Sigma_2^j)\,=\, \mathbb{Z}_2$ is because Zinger states his formula for a generic $j$.
However, his arguments go through for any $j$; one simply divides out the difference between 
between the symplectic invariant and the correction term by a different factor (see remark 
\ref{automorphisms}). 
\end{rem}

Recently, the problem of enumerating genus $g$ curves with a fixed complex structure 
has also been studied by tropical geometers. 
In \cite{Kerber_Markwig}, Kerber and Markwig 
compute the number of tropical elliptic curves in $\mathbb{P}^2$ with a fixed $j$-invariant. 
A priori this number need not be the same as the number of elliptic curves in $\mathbb{P}^2$ 
(with a fixed $j$-invariant) that pass through $3d-1$ generic points. In  \cite[Theorem A]{Yoav_Len_Raghunathan}, 
Len and Ranganathan prove that there is a one to one correspondence between 
the Tropical count of curves obtained by Kerber and Markwig in \cite{Kerber_Markwig} 
and the actual number of curves (i.e. the enumerative number). This gives a Tropical geometric 
proof of Theorem \ref{qu}. 
In \cite[Theorem B]{Yoav_Len_Raghunathan}, 
Len and Ranganathan also 
obtain a formula for the number of elliptic 
curves with a fixed $j$-invariant of a given degree 
for Hirzebruch surfaces, using methods from tropical geometry 
(i.e they obtain the formula for genus one enumerative invariants of Hirzebruch surfaces, not just 
the tropical count of curves).\\ 
\hf \hf The problem of counting genus two curves with a fixed complex structure is also 
currently being investigated by tropical geometers. It would be interesting if Zinger's 
formula (and more generally, the formula obtained here) can be recovered by 
tropical methods. 

\section{Enumerative versus symplectic invariant}

Let us now explain the basic idea to compute $n^j_{2, \beta}$.
We will be closely following the discussion and recalling the notions introduced in \cite[Pages 261-263]{RT}. 
Let $(X\, , \omega)$ be a compact semipositive symplectic manifold
of dimension $2m$ and $\beta \,\in\, H_2(X, \mathbb{Z})$ 
a homology class. Let $k$ be a nonnegative integer such that 
$k+ 2 g \,\geq\, 3$. 
Let $[\alpha_1], \cdots, [\alpha_k]$ and $[\gamma_1], \cdots, [\gamma_l]$ 
be integral homology classes in $H_*(X, \mathbb{Z})$ such that 
\begin{align}
\sum_{i=1}^{k} 2 m - \textnormal{deg}(\alpha_i) +
\sum_{j=1}^{l} (2m -2 - \textnormal{deg}(\gamma_j)) \,=\, 
2m(1-g) + 2 \langle c_1(TX), ~\beta \rangle\, . \label{dim_condition}
\end{align}
Fix cycles $A_i$, $1\,\leq\, i\, \leq\, k$, and $B_j$, $1\,\leq\, j\, \leq\, l$,
on $X$ representing the homology classes $\alpha_i$ and $\gamma_j$. Fix a compact
Riemann surface $\Sigma_g$ of genus $g$; its complex structure will be denoted by $j$.
Define 
$$
\mathcal{M}_{g, k}^{\nu, j} (X, \beta; \alpha_1, \cdots, \alpha_k; \gamma_1,
\cdots, \gamma_l)\,:=\, \{ (u, y_1, \cdots, y_k) \,\in\,
\mathcal{C}^{\infty}(\Sigma_g, X) \times \Sigma_g^k\,\mid
$$
$$
\, \overline{\partial}_{j, J} u  \,=\, \nu ,\ 
u(y_i) \,\in\, A_i ~~\forall ~i = 1, \cdots ,k,\ 
~~\textnormal{Im}(u)\cap \B_j\,\not=\,\emptyset ~~\forall j =1, \cdots ,l\}\, ,
$$
where $\nu$ is generic smooth $(0,1)$ form, i.e. 
$\nu \in \Gamma(\Sigma_g \times X, \Lambda^{0,1}T^*\Sigma_g \otimes TX)$. 
Note that the last condition says that 
the image of $u$ intersects $B_i$; it \textit{does not} say that a specific marked point lands on $B_i$. 
The previous condition says that a specific marked point lands on $A_i$. These two conditions are different.\\  
\hf \hf The Symplectic Invariant (or the Ruan--Tian invariant) is defined to be the 
signed cardinality of the above set, i.e., 
$$
\mathrm{RT}_{g, \beta}(\alpha_1, \cdots, \alpha_k; \gamma_1, \cdots, \gamma_l)\,:=\, 
\pm |\mathcal{M}_{g, k}^{\nu, j} (X, \beta; \alpha_1, \cdots, \alpha_k; \gamma_1, \cdots,
\gamma_k)|\, . 
$$
When $k=0$, we denote the invariant as 
$$\mathrm{RT}_{g, \beta}(\emptyset ; \gamma_1, \cdots, \gamma_l).$$ 
Similarly, when $l=0$ we denote the invariant as 
$$\mathrm{RT}_{g, \beta}(\alpha_1, \cdots, \alpha_k; \emptyset).$$
If \eqref{dim_condition} is not satisfied, then 
we formally define the invariant to be zero. 

A natural question to ask is whether the symplectic invariant coincides with the 
enumerative invariant $n_{g, \beta}^j$. 
It is stated in \cite[page 267]{RT} that when $g\,=\,0$ and $X$ is a del-Pezzo surface, the 
Symplectic Invariant is same as the enumerative invariant, meaning,
\[ \mathrm{RT}_{0,\beta}(\emptyset; p_1, \cdots, p_{\delta_{\beta}-1}) \,=\, n_{0, \beta}\, .\] 
However, starting from $g=1$, the enumerative invariant is no longer the same as 
the Symplectic Invariant. Let us give a brief explanation as to why this is the case.\\ 
\hf \hf In general, the following statement is true 
\begin{equation}\label{rt_equal_eg_plus_cr}
\mathrm{RT}_{g,\beta} \,=\, |\textnormal{Aut}(\Sigma_g^j) | n^j_{g, \beta}
+ \mathrm{CR}_{g, \beta}\, ,
\end{equation}
where $\mathrm{CR}_{g, \beta}$ denotes a correction term. Let us explain what this
term means and why it arises. 
First we note that the factor of $|\textnormal{Aut}(\Sigma_g^j)|$ is there because
in the definition of the Symplectic Invariant we do not mod out by the automorphisms of the domain $\Sigma$. 
Hence, if $u\,:\, (\Sigma_g, j)\,
\longrightarrow\, X$ is a solution to the $\overline\partial$--equation, then there will 
be $|\textnormal{Aut}(\Sigma_g^j)|$ new solutions close to $u$ to the perturbed 
$\overline\partial$--equation.\\ 
\hf \hf Next, we note that when $g\,=\,2$, as $\nu
\,\rightarrow\, 0$, a sequence of
$(J\, ,\nu)$-holomorphic maps can also converge to a bubble map. 
These bubble maps will also contribute to the computation of $\mathrm{RT}_{2,\beta}$ invariant. 
This extra contribution is defined to be the correction term $\mathrm{CR}_{2, \beta}$. 
This correction term 
$\mathrm{CR}_{2, \beta}$ is computed in 
\cite{zinger_phd} when $X\,:=\, \mathbb{P}^2$ by expressing 
it as the intersection of certain tautological classes on the moduli space of 
rational curves and in terms of the number of rational cuspidal curves. 
In section \ref{boundary_contribution} we modify 
the arguments presented by Zinger in \cite{zinger_phd} 
and compute the correction term for del-Pezzo surfaces 
(one of the necessary steps there is to compute the characteristic number of rational curves with a cusp; this is 
computed in our paper \cite{IB_SDM_RM_VP} which we use in this paper). In section \ref{SI}, we 
compute the genus two Symplectic Invariant for del-Pezzo surfaces, using 
formulas 
$(1.1)$ and $(1.2)$ in \cite[page 263]{RT}. Combining these two and using \eqref{rt_equal_eg_plus_cr}, we obtain the 
genus two enumerative invariant $n^j_{2,\beta}$. 

\begin{rem}
Let us now explain why the Symplectic and Enumerative invariants of del-Pezzo surfaces are well defined.  
The former is well defined because del-Pezzo surfaces are semi-positive Symplectic manifolds. Any K{\"a}hler surface is a semi positive symplectic manifold.  
To see why this is so, let us recapitulate the definition of a semi-positive Symplectic manifold from \cite[Definition 6.4.1, Page 156]{McSa}: 
a $2n$-dimensional symplectic manifold $(X, \omega)$ is semi-positive if for every spherical homology class $\beta \in  H_2(X, \mathbb{Z})$ 
we have 
\[ \textnormal{if} \qquad \langle \omega, \beta \rangle >0 \qquad \textnormal{and} \qquad \langle c_1(TX), \beta \rangle \geq 3-n \qquad \textnormal{then} \qquad  \langle c_1(TX), \beta \rangle \geq 0.  \] 
This definition clearly holds for any K{\"a}hler surface (since $n =2$). \\
\hf \hf We now note that Symplectic Invariants (as defined in \cite[Pages 260-263]{RT}) are well defined for any semi-positive Symplectic manifold. 
This is summarized in \cite[Theorem 7.2, Page 263]{RT}, where the authors say that the composition laws 
$(1.1)$ and $(1.2)$ in \cite[page 263]{RT} to compute the Symplectic Invariant are valid for any semi-positive Symplectic manifold 
(in particular the Symplectic Invariants are well defined). Hence, the Symplectic Invariant is well defined for a del-Pezzo surface. \\ 
\hf \hf The enumerative invariant $n^j_{2,\beta}$ is well defined because the complex structure on a del-Pezzo surface is regular as we show in section \ref{genus_two_regular}. 
That means that the expected dimension of the moduli space (which is $\delta_{\beta}-1$) is same as the actual dimension 
of the moduli space. Since passing through a point cuts the dimension by one, we conclude that $n^j_{2, \beta}$ is a well defined number. \\ 
\end{rem}

\begin{rem}
One of the things we need to know, in order to compute the correction term is the number of 
genus zero curves in $X$ passing through $\delta_{\beta}-1$ points that have a cusp. This 
number is computed in our paper \cite{IB_SDM_RM_VP}. This is defined by counting the 
number of zeros of the section an appropriate bundle on the moduli space of rational curves. 
We show that the section is transverse to the zero set (\cite[Lemma 6.2, Page 14]{IB_SDM_RM_VP}). 
Hence the characteristic number of rational cuspidal curves is well defined and enumerative. 
\end{rem}


\begin{rem}\label{automorphisms}
The symplectic invariant $\mathrm{RT}_{g, \beta}$ does not depend on $j$. Neither 
does the correction term $\mathrm{CR}_{g, \beta}$. It is only the enumerative 
invariant $n_{g, \beta}^j$ that depends on $j$. Equation \eqref{rt_equal_eg_plus_cr} 
implies that
$$
n^{j}_{g, \beta}\,=\, \frac{\mathrm{RT}_{g, \beta} - \mathrm{CR}_{g, 
\beta}}{|\textnormal{Aut}(\Sigma_g^j)|}\, .
$$ It is because of the presence of the 
factor $|\textnormal{Aut}(\Sigma_g^j)|$, that $n^j_{g, \beta}$ depends on $j$. If 
$g\,=\,2$ and $j$ is a generic complex structure on a genus two surface, then 
$|\textnormal{Aut}(\Sigma_g^j)|\,=\,2$.
\end{rem}


\section{Computation of the symplectic invariant}
\label{SI}

We compute the symplectic invariant $\mathrm{RT}_{2,\beta}$ using formulas 
$(1.1)$ and $(1.2)$ in \cite[page 263]{RT}. 
Throughout the discussion $X$ is a complex del-Pezzo surface 
(either $\mathbb{P}^2$ blown up at $k \leq 8$ points or $\mathbb{P}^1 \times \mathbb{P}^1$).\\
\hf \hf Let $e_0 := [\textnormal{pt}] \in H_0(X, \mathbb{Z})$ denote the class of a point 
in $X$ and let $e_{n+1} := [X] \in H_4(X, \mathbb{Z})$ denote the class of the whole space. 
Let $e_1\, , e_2\, , \cdots\, , e_n$ be a basis for $H_2(X,\,
\mathbb{Z})$. 
For notational conveniences, we will also use 
\[ f_1 \,:=\, e_1, ~f_2 \,:=\, e_2, ~ \cdots, ~ f_{n} \,:=\, e_n\, .\] 
The reason for this duplication of notation will be clear soon. 
Note that there is no such thing as $f_0$ or $f_{n+1}$. 
Hence, if we encounter a term $f_i$, it is understood that $i$ is between $1$ and 
$n$.

Define
$$
g_{ij}\,:=\, e_{i} \cdot e_j \ \ \qquad \textnormal{ and }\ \ \qquad g^{ij}\,:=
\,\Big(g^{-1}\Big)_{ij}\, .
$$
If the degree of $e_i$ and $e_j$ do not add up to the dimension of $X$, then define 
$g_{ij}\,:=\, 0$. Also, define the set of points $p_1, p_2, \ldots, p_{\delta_{\beta}-1}$ collectively as $\mu$, i.e. 
\[ \mu \,:=\, (p_1\, , p_2\, , \cdots\, , p_{\delta_{\beta}-1})\, . \]
We will also be using the Einstein summation convention, since it will 
make the subsequent computation much easier to read.

To compute the symplectic invariant, using
\cite[page 263, (1.2)]{RT}, it follows that 
\begin{align}
\mathrm{RT}_{2, \beta}(\emptyset; [\mu]) & =\mathrm{RT}_{1, \beta}(e_i, e_j; [\mu]) g^{ij} \nonumber \\ 
& = \mathrm{RT}_{0, \beta} (e_i, e_j, e_k, e_l; [\mu]) g^{ij} g^{kl} \nonumber \\
& = \mathrm{RT}_{0, \beta} (\textnormal{pt}, X, e_k, e_l; [\mu]) g^{kl} 
+ \mathrm{RT}_{0, \beta} (X, \textnormal{pt}, e_k, e_l; [\mu]) g^{kl} \nonumber \\ 
& ~~ + \mathrm{RT}_{0, \beta} (e_i, e_j, \textnormal{pt}, X; [\mu]) g^{ij}
+ \mathrm{RT}_{0, \beta} (e_i, e_j, X, \textnormal{pt}; [\mu]) g^{ij} \nonumber \\ 
& ~~ + \mathrm{RT}_{0, \beta} (f_i, f_j, f_k, f_l; [\mu]) g^{ij} g^{kl} \nonumber \\ 
& = 4 n_{0, \beta} (\beta \cdot f_i) (\beta \cdot f_j) g^{ij} 
+ \mathrm{RT}_{0, \beta} (f_i, f_j, f_k, f_l; [\mu]) g^{ij} g^{kl}.\label{rt_2_eqn_one}
\end{align}
Next, using 
\cite[page 263, (1.1)]{RT} it follows that
$$
\mathrm{RT}_{0, \beta} (f_i, f_j, f_k, f_l; [\mu]) g^{ij} g^{kl}
$$
$$
=\,
\mathrm{RT}_{0, \beta} (f_i, f_j, \textnormal{pt}; [\mu]) \mathrm{RT}_{0,0} (X, f_k, f_l; 
\emptyset) g^{ij} g^{kl}
+ \mathrm{RT}_{0, 0} (f_i, f_j, X; \emptyset) \mathrm{RT}_{0, \beta} 
(f_k, f_l, \textnormal{pt}; [\mu]) g^{ij}g^{kl}
$$
\begin{equation}\label{rt_eqn_two}
+\sum_{\beta_1+ \beta_2= \beta} \binom{\delta_{\beta}-1}{\delta_{\beta_1}} n_{0, \beta_1} n_{0, \beta_2}
(\beta_1 \cdot f_i) (\beta_1 \cdot f_j) (\beta_1 \cdot f_m) (\beta_2 \cdot f_n) (\beta_2 \cdot f_k) (\beta_2 \cdot f_l) 
g^{ij} g^{kl} g^{mn}\, .\\ 
\end{equation}
Note that 
\begin{align}
\mathrm{RT}_{0, 0} (X, f_k, f_l; \emptyset) g^{kl} & = (f_k \cdot f_l) g^{kl} = b_2(X), \label{betti_number}
\end{align}
where $b_2(X)$ is the dimension of $H_2(X, \mathbb{Z})$. 

Next, we observe that 
\begin{align}
(\gamma_1 \cdot f_i) (\gamma_2 \cdot f_j) g^{ij} & = \gamma_1 \cdot \gamma_2 \qquad \forall 
\gamma_1, ~\gamma_2 \in H_2(X, \mathbb{Z}). \label{beta_square}
\end{align}
Using equation \eqref{rt_2_eqn_one}, \eqref{rt_eqn_two}, \eqref{betti_number} and 
\eqref{beta_square}
it follows that 
\begin{equation}\label{rt_blow_up}
\mathrm{RT}_{2, \beta} (\emptyset; [\mu]) \,=\, (4 + 2 b_2(X)) n_{0, \beta} \beta \cdot \beta + 
\sum_{\beta_1+ \beta_2= \beta} \binom{\delta_{\beta}-1}{\delta_{\beta_1}} \beta_1 ^2 \beta_2 ^2 
(\beta_1 \cdot \beta_2) n_{0, \beta_1} n_{0, \beta_2}\, . 
\end{equation}

\section{Computation of the correction term: boundary contribution} 
\label{boundary_contribution}
\hf \hf We will be now compute the boundary contribution to the Sympelctic Invariant by closely following Zinger's thesis \cite{zinger_phd}.  
The results we will be using have been published in two papers, namely \cite{ZiSG} and \cite{g2p2and3}. However, we feel it will be 
easier and more convenient for the reader if we follow and refer to Zinger's thesis \cite{zinger_phd} because the entire material is 
presented in one single document. Furthermore, some of the arguments/exposition in the thesis \cite{zinger_phd} is slightly easier 
to follow and modify in the cases we need (when the target space is not $\mathbb{P}^2$, but a del-Pezzo surface). Henceforth, we will 
be referring to Zinger's thesis \cite{zinger_phd}. His thesis is available online at the following link 
\[ \textnormal{\url{http://dspace.mit.edu/handle/1721.1/8402?show=full}} \] 
\hf \hf Let us now explain how we will use the results of Zinger's thesis \cite{zinger_phd} to compute the 
boundary contribution. First of all, we will describe the boundary strata that contributes to the Symplectic Invariant.
Along the way, we will justify why there are no other boundary contributions. 
In order to do that, we will 
use a dimension counting argument that uses the fact that the complex structure on a del-Pezzo 
surface is genus two regular. \\ 
\hf \hf Next, we will express the boundary contribution from each of the 
strata by expressing them as an intersection number on the moduli space of rational curves. 
This step will be justified by modifying the arguments in the proofs of the relevant theorems in Zinger's thesis \cite{zinger_phd} 
(where he proves these assertions when the target space is $\mathbb{P}^2$).  
Finally, we will proceed to calculate these intersection numbers, 
by using the results of our earlier paper \cite{IB_SDM_RM_VP}. We will now proceed to describe the boundary
(the following Lemma and its proof is closely based on what is discussed in \cite[Section 7.1, Page 135]{zinger_phd}).  
\begin{lmm}
\label{bd_description_lm}
Let $(\Sigma, j)$ be a smooth genus two 
Riemann surface with a fixed complex structure $j$ and 
let $\beta \in H_2(X, \mathbb{Z})$ be a homology class.  
Given a perturbation $\nu$ and a positive real number $t$,  consider the moduli space 
$\mathcal{M}_{2, \delta_{\beta}-1}^{t \nu, j}(X, \beta; \mu; \emptyset)$.
Let 
\[(u_t, \overline{y}_t):= (u_t, y_1(t), y_2(t), \ldots, y_{\delta_{\beta}-1}(t)) \in  \mathcal{M}_{2, \delta_{\beta}-1}^{t \nu, j}(X, \beta; \mu; \emptyset),\]
where $\overline{y}_t$ is a collection of $\delta_{\beta}-1$ distinct points on the \textit{domain} (i.e. $\Sigma$). As $t$ goes to zero, 
the map $(u_t, \overline{y}_t)$ must converge to  a stable map $(u, \overline{y})$, where $(u, \overline{y})$  
can be only one of the following objects:  
\begin{enumerate}
\item \label{enum_inv_case}The map $(u, \overline{y})$ is a non multiply covered holomorphic map (of degree $\beta$) with 
a \textit{smooth} domain $\Sigma$.\\ 

\item \label{bd_inv_case} The stable map $(u, \overline{y})$ is a holomorphic \textit{bubble} 
map (of degree $\beta$) where the domain  is a tree of $S^2$ attached to the principal component  
$\Sigma$ (with the marked points distributed on the whole domain)
and the map $u$, restricted to $\Sigma$  is constant. 
\end{enumerate}
\end{lmm}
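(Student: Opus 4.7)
The plan is to adapt the argument of Zinger from Section 7.1 of \cite{zinger_phd} (written there for the target $\mathbb{P}^2$) to the del-Pezzo setting, by combining the Ruan--Tian version of Gromov compactness, the fact that the complex structure $j$ on $\Sigma$ is held fixed along the family, and a dimension count that relies on the genus-two regularity of $J$ on $X$ that will be established in Section \ref{genus_two_regular}.

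First, I would apply the Ruan--Tian compactness theorem for $(j,J)$-holomorphic maps perturbed by $t\nu$: after passing to a subsequence, $(u_t,\overline{y}_t)$ converges in the Gromov topology to a stable map $(u,\overline{y})$. Because $j$ is never deformed (only the perturbation $t\nu$ on the target side is varied), the standard energy-concentration analysis shows that every non-smooth point of the limit domain must be an attachment point of a tree of $J$-holomorphic $S^2$'s; in particular, the domain of $(u,\overline{y})$ consists of $(\Sigma,j)$ together with (possibly empty) trees of $S^2$'s attached to it. Since $t\nu \to 0$, the limit map is honestly $J$-holomorphic on every component. This already forces $(u,\overline{y})$ to have one of the two structural shapes listed in (\ref{enum_inv_case})--(\ref{bd_inv_case}); all that remains is to exclude the degenerate subcases within each.

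Next I would rule out the two bad possibilities. In case (\ref{enum_inv_case}), a multiply-covered map would factor through a genus-two map of strictly smaller degree, whose moduli space, cut down by $\delta_\beta -1$ generic point constraints, has strictly negative virtual dimension (genus-two regularity makes this virtual dimension actual). In case (\ref{bd_inv_case}), I must exclude $u|_\Sigma$ having nonzero degree $\beta'$ with $0 < \beta' \le \beta$: by genus-two regularity the moduli space of genus-two, degree-$\beta'$ $J$-holomorphic maps from $(\Sigma,j)$ has complex dimension $\delta_{\beta'}-1$, while the bubble tree of total degree $\beta - \beta'$ occupies a stratum of complex dimension at most $\langle c_1(TX),\beta - \beta'\rangle -1$, each attachment point at $\Sigma$ imposing a further codimension-$2$ matching condition. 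Summing these contributions and comparing with the real codimension $2(\delta_\beta -1)$ imposed by $\mu$ shows that every such stratum has strictly negative expected dimension, so for a generic choice of $\mu$ (and generic $\nu$) it is empty.

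The main obstacle is this last dimension count: one must verify, for every decomposition $\beta = \beta' + (\beta-\beta')$ with $\beta' \ne 0$ and every combinatorial type of attached rational tree, that the corresponding stratum has strictly negative expected dimension after imposing the $\delta_\beta -1$ generic point constraints. This is precisely where the hypotheses on $\beta$ ($d>2$ in the blow-up case, $a,b>2$ in the $\mathbb{P}^1\times\mathbb{P}^1$ case) enter, since they guarantee genus-two regularity for all the relevant intermediate degrees $\beta'$ and so make the virtual counts rigorous. Once this count is checked, the rest of the proof is a line-by-line transcription of Zinger's argument, with $\langle c_1(TX),\beta\rangle$ replacing $3d$ throughout.
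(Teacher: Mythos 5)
Your proof takes essentially the same approach as the paper's: Gromov (Ruan--Tian) compactness identifies the limit as $(\Sigma, j)$ with rational bubbles attached, and a dimension count rules out all strata where $u|_{\Sigma}$ has nonzero degree, whether simple or multiply covered. The paper organizes the exclusion into separate explicit subcases (simple nonconstant $u|_\Sigma$ with bubbles; multiply covered $u|_\Sigma$) and directly counts how many generic points each configuration can pass through, which is equivalent to your expected-dimension computation made rigorous by genus-two regularity.
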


\begin{rem}
Note that the cardinality of the set 
appearing in Lemma \ref{bd_description_lm}, case \eqref{enum_inv_case} is our desired enumerative invariant. 
The objects arising in Lemma \ref{bd_description_lm}, case \eqref{bd_inv_case}
are the excess boundary contribution to the Symplectic Invariant.  
\end{rem}

\begin{rem}
Note that in case \eqref{bd_inv_case}, at least one $S^2$ is attached to the principal component $\Sigma$.  
\end{rem}

\noindent \textbf{Proof:} Note that when $(u_t, \overline{y}_t)$ converges to a either a bubble map (where at least one $S^2$ is attached to the 
$\Sigma$) or a multiply covered map, there are a priori, three possibilities, namely: 
\begin{enumerate} 
\label{bd_ct_full}
\item \label{bd_ct_case1} The map $u$, restricted to the principle component $\Sigma$ is simple (i.e. non multiply covered) and the tree 
contains at least one $S^2$. 
\item \label{bd_ct_case2} The map $u$, restricted to $\Sigma$ is multiply covered. 
\item \label{bd_ct_case3} The map $u$, restricted to $\Sigma$  is constant and the tree contains at least one $S^2$.
\end{enumerate}
To prove our claim, it suffices to show that 
cases \eqref{bd_ct_case1} and \eqref{bd_ct_case2} can not occur.\\ 
\hf \hf Let us first justify why case \eqref{bd_ct_case1} can not occur. 
Since $u$ restricted to $\Sigma$ is simple (non-multiply covered), it is not constant. 
Suppose the degree of $u$ restricted to $\Sigma$ is $\beta_1$ and the degree on the bubble component is 
$\beta_2$. Hence $\beta = \beta_1 + \beta_2$. Since $u$ restricted to $\Sigma$ is non constant, we conclude that 
$\beta_1 \neq 0$. Suppose there is exactly one $S^2$ attached to the principal component $\Sigma$. Then this 
object will pass through $\langle x_1, \beta_1 \rangle -2 + \langle x_1, \beta_2 \rangle-1$ generic points. But 
\[ \delta_{\beta}-1 = \langle x_1, \beta \rangle -2 >  \langle x_1, \beta_1 \rangle -2 + \langle x_1, \beta_2 \rangle-1.\] 
Hence, this object can not pass through $\delta_{\beta}-1$ generic points. A similar argument holds if there are more than one 
$S^2$ attached to the principal component $\Sigma$. \\ 
\hf \hf Next, let us justify why case \eqref{bd_ct_case2} can not occur. Let us first assume that there is no $S^2$ attached to the $\Sigma$, but 
$u$ is multiply covered. We will use a dimension counting argument by studying the degree of the underlying reduced 
curve to show that such a $u$ can not occur. 
Since $u$ is multiply covered, we can assume that $u := v \circ \phi$, where $\phi$ is a holmorphic map from $\Sigma$ to itself, with degree greater than $1$ 
and $v$ is a holomorphic cap from $\Sigma$ to $X$.
Let us assume the degree of $\phi$ is $r$. Assume that $v$ represents the class $\beta^{\prime}$. Hence 
\[ \beta = r \beta^{\prime}. \] 
We now note that if $r >1$, then 
\[ \langle x_1, \beta^{\prime} \rangle -1 < \langle x_1, \beta \rangle -1 = \delta_{\beta}-1. \] 
Hence, this curve will not be able to pass through $\delta_{\beta}-1$ generic points. The same argument holds 
if a certain number of $S^2$ were attached to the $\Sigma$ and $u$ was multiply covered restricted to $\Sigma$. This proves our claim. \qed \\

\subsection{Description of the boundary strata} 
\verb+ +\newline 
\hf \hf We will now describe the 
boundary strata, that arising from the objects 
in Lemma \ref{bd_description_lm}, case \eqref{bd_inv_case}. 
Before that let us recapitulate some notations about moduli space of rational curves.  \\
\hf \hf Consider 
the moduli space of rational degree $\beta$ curves on $X$ that 
represent the class $\beta\,\in\, H_2(X,\, {\mathbb Z})$ 
and are equipped with $n$ ordered marked points. 
Let $\mathcal{M}_{0,n}(X, \beta)$
denote the equivalence classes of such curves, i.e. if $\phi:\mathbb{P}^1\longrightarrow \mathbb{P}^1$ 
is an automorphism of $\mathbb{P}^1$, then 
\[ [u, y_1, \ldots, y_n] = [u \circ \phi, \phi^{-1}(y_1), \ldots, \phi^{-1}(y_n)]. \]
In other words,
$$
\mathcal{M}_{0,n}(X, \beta)\,:=\, \{ (u, y_1, \cdots, y_n) \,\in\,
\mathcal{C}^{\infty}(\mathbb{P}^1, X) \times
(\mathbb{P}^1)^n\,\mid ~ \overline{\partial}u\,=\,0, ~~u_*[\mathbb{P}^1]
\,= \,\beta \}/\text{PSL} (2, \mathbb{C})\, , 
$$
with $\text{PSL}(2, \mathbb{C})$ acting diagonally on $\mathbb{P}^1\times
(\mathbb{P}^1)^n$. 
For any $k\,\leq \,n$, let
$$\mathcal{M}_{0,n}(X, \beta; p_1, \cdots, p_{k})\,\subset\,\mathcal{M}_{0,n}(X, \beta)$$
be the subspace consisting of rational curves with
$n$ marked points such that the $i$-th marked point is $p_i$
for all $1\,\leq\, i\, \leq\, k$, so,
$$
\mathcal{M}_{0,n}(X, \beta; p_1, \cdots, p_{k})
\,:=\, \{ [u, y_1, \cdots, y_n] \,\in\, \mathcal{M}_{0,n}(X, \beta)\,\mid~ u(y_i) \,=\,
p_i~ \ \forall~ i \,= \,1, \cdots ,k\}\, .
$$
Let $\overline{\mathcal{M}}_{0,n}(X, \beta)$ denote the stable map 
compactification of $\mathcal{M}_{0,n}(X, \beta)$. \\
\hf \hf We will now concretely describe the space that arises in Lemma \ref{bd_description_lm}, case \eqref{bd_inv_case}. 
Suppose there is only \textit{one} $S^2$ attached to the principal component 
as described in the following picture. 
\begin{figure}[!htb]
  \begin{minipage}[b]{0.3\textwidth}
  \includegraphics[scale=0.4]{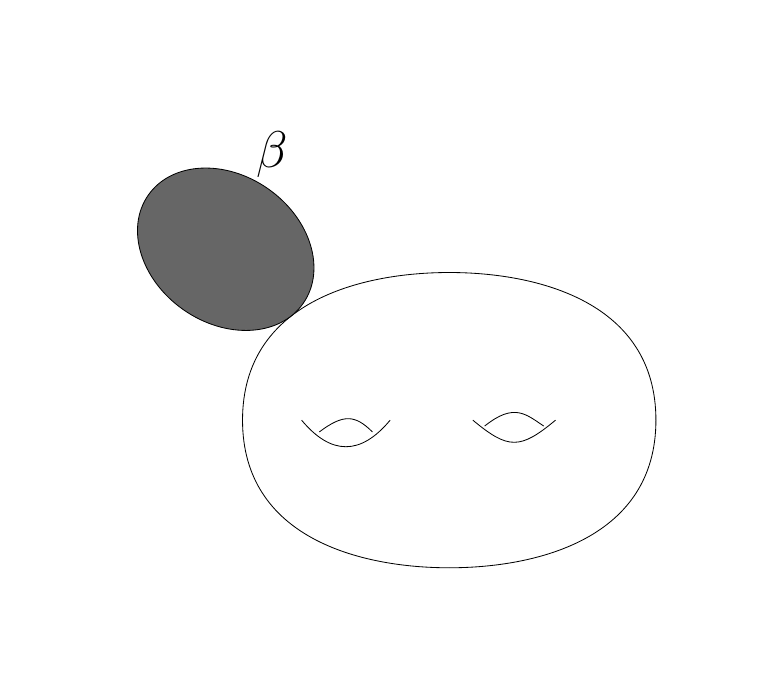}    
  \caption{$\mathcal{B}^1_1$, $\mathcal{B}^2_1$ and $\mathcal{B}^3_1$.}
  \end{minipage}%
\end{figure}
This space can be identified with the space $\Sigma \times \mathcal{M}_{0,\delta_{\beta}}(X, \beta; \mu)$. 
Note that there are $\delta_{\beta}$ marked points in $\mathcal{M}_{0,\delta_{\beta}}(X, \beta; \mu)$, 
not $\delta_{\beta}-1$. The first $\delta_{\beta}-1$ 
points are mapped to $\mu$ and the last point is a free marked point. This free marked 
point is identified with a point on $\Sigma$ and hence we can form a bubble map. Hence the space can be identified 
with $\Sigma \times \mathcal{M}_{0,\delta_{\beta}}(X, \beta; \mu)$.\footnote{We note 
that all the marked points will have to lie on the $S^2$ component; none of them can lie 
on the principal component, since the map is constant there and hence goes to the value of 
where the nodal point goes.}\\ 
\hf \hf Let us define $\mathcal{B}^1_1$ to be the subspace of $\Sigma \times \mathcal{M}_{0,\delta_{\beta}}(X, \beta; \mu)$ 
where the sphere does not have a cusp at the point attached to $\Sigma$, i.e. 
\begin{align}
\mathcal{B}^1_1 &:= \{ (x, [u, \overline{y}, y]) \in \Sigma \times \mathcal{M}_{0,\delta_{\beta}}(X, \beta; \mu): du|_{y} \neq 0\}.  \label{b11}
\end{align}
\hf \hf Next, given a genus two Riemann surface $(\Sigma, j)$, it contains six distinguished Weierstrass 
points, namely $z_1, z_2, \ldots, z_6 \in \Sigma$. Let 
\[ \Sigma^* := \Sigma - \{z_1, \ldots, z_6\}. \]
Let us define $\mathcal{B}^2_1$ to be the subspace of  
$\Sigma^* \times \mathcal{M}_{0,\delta_{\beta}}(X, \beta; \mu)$ where the curve $[u,y]$ has a 
cusp at the free marked point (i.e. $du|_y = 0$).
In other words 
\begin{align}
\mathcal{B}^2_1 &:= \{ (x, [u, \overline{y}, y]) \in \Sigma^* \times \mathcal{M}_{0,\delta_{\beta}}(X, \beta; \mu): du|_{y} = 0\}. \label{b21} 
\end{align}
\hf \hf Next, let us define $\mathcal{B}^3_1$ to be the subspace of $\{z_1, \ldots, z_6\} \times \mathcal{M}_{0,\delta_{\beta}}(X, \beta; \mu)$ 
where the curve $[u,y]$ has a 
cusp at the free marked point (i.e. $du|_y = 0$). In other words, 
\begin{align}
\mathcal{B}^3_1 &:= \{ (x, [u, \overline{y}, y] \in \{z_1, \ldots, z_6 \} \times \mathcal{M}_{0,\delta_{\beta}}(X, \beta; \mu): du|_{y} = 0\}.  \label{b31}
\end{align}
The significance of these Weierstrass 
points will be clear when we follow Zinger's thesis \cite{zinger_phd} to compute the boundary contribution. \\
\hf \hf We now make a simple observation about what can not occur in the boundary. 

\begin{lmm}
\label{bd3}
Consider a configuration where there is at least one $S^2$ attached to the principal component and 
another $S^2$ attached to this $S^2$. 
\begin{figure}[!htb]
  \begin{minipage}[b]{0.3\textwidth}
  \includegraphics[scale=0.4]{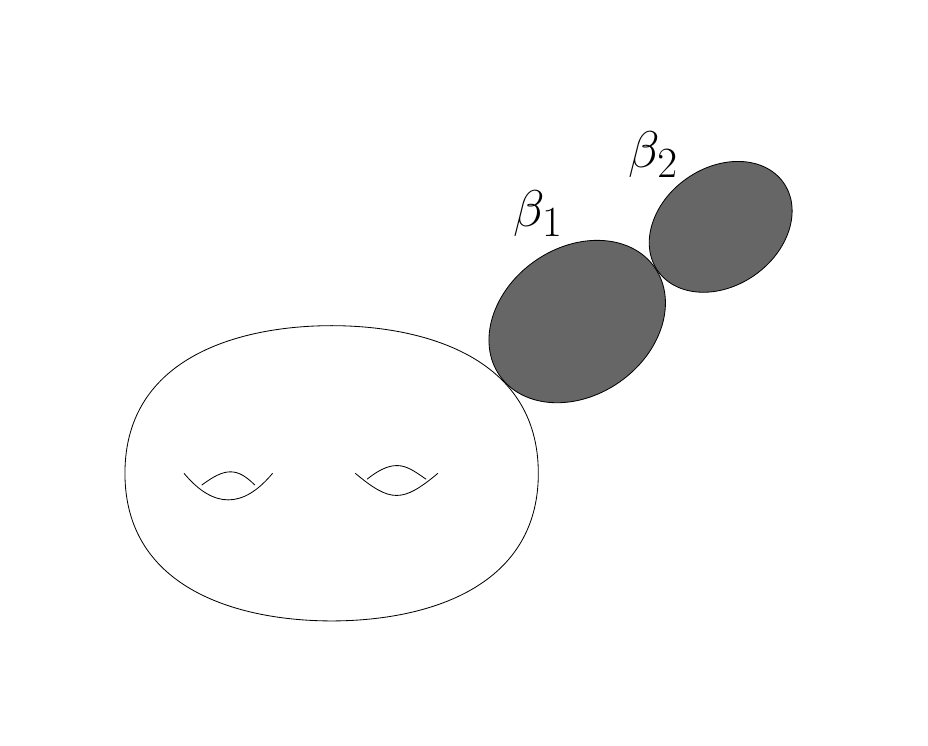}    
  \caption{Not Possible}
  \end{minipage}%
\end{figure}
This configuration can not occur in the boundary. 
\end{lmm}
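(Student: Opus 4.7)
The plan is to show by a dimension count that the chain-$2$ stratum has strictly smaller dimension than the one-bubble stratum $\mathcal{B}_1$, and then to invoke a standard genericity argument to conclude that the $0$-dimensional set of Gromov limits misses it.

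Suppose the chain-$2$ configuration $\Sigma \cup_p \Sigma_1 \cup_q \Sigma_2$ appeared as a Gromov limit of $(u_t,\overline{y}_t) \in \mathcal{M}^{t\nu, j}_{2,\delta_\beta-1}(X,\beta;\mu;\emptyset)$ as $t \to 0$. By Lemma \ref{bd_description_lm}, $u|_\Sigma$ is constant. Writing $\beta_i := (u|_{\Sigma_i})_*[\Sigma_i]$, we have $\beta_1+\beta_2=\beta$, and repeating the arguments used to rule out cases \eqref{bd_ct_case1} and \eqref{bd_ct_case2} in the proof of Lemma \ref{bd_description_lm} forces $\beta_1,\beta_2\geq 1$: otherwise a ghost sphere would have to carry marked points that map to distinct generic $p_j$'s. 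Let $n_i$ be the number of $y_j$'s on $\Sigma_i$; since no marked point can lie on $\Sigma$ (its image under $u$ is a single point, whereas the generic $p_j$'s are distinct), we have $n_1 + n_2 = \delta_\beta-1$.

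Using $\dim_\mathbb{C} \mathcal{M}_{0,n}(X,\beta_i) = \langle x_1, \beta_i\rangle + n - 1$ and the fact that each evaluation constraint cuts the complex dimension by $\dim_\mathbb{C} X = 2$, the expected complex dimension of the chain-$2$ stratum is
\[
(\langle x_1,\beta_1\rangle - n_1+1)+(\langle x_1,\beta_2\rangle - n_2)+1-2 \;=\; \langle x_1,\beta\rangle - (\delta_\beta -1) \;=\; 2,
\]
where the $+1$ accounts for the free attachment point of $\Sigma_1$ on $\Sigma$ and the $-2$ for the node-matching condition $u|_{\Sigma_1}(q) = u|_{\Sigma_2}(q')$ in $X$. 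This is strictly less than $\dim_\mathbb{C} \mathcal{B}_1 = 3$, so the chain-$2$ stratum has complex codimension at least $1$ inside $\mathcal{B}_1$, i.e.\ real codimension at least $2$.

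To conclude, I would invoke the standard Ruan--Tian transversality theorem (\cite[Theorem 7.2, Page 263]{RT}) for the semipositive del-Pezzo target $X$: for a generic pair $(\nu,\mu)$ the parametrized universal moduli is cut out transversely, so the finite set of Gromov limits of the one-parameter family $\{\mathcal{M}^{t\nu,j}_{2,\delta_\beta-1}\}_{t \in (0,1]}$ lies in the open top-dimensional boundary stratum $\mathcal{B}_1$ and avoids any sub-stratum of real codimension $\geq 2$. Since the chain-$2$ stratum is of real codimension $\geq 2$ in $\mathcal{B}_1$, it is not encountered as a Gromov limit, proving the claim. The main obstacle is making this last transversality step fully rigorous in the limit $t \to 0$, which relies on the semipositivity of del-Pezzo surfaces together with the genus-two regularity established in section \ref{genus_two_regular}.
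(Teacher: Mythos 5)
Your dimension count gives $\dim_{\mathbb{C}}(\text{chain-}2) = 2$, which is correct and indeed one less than $\dim_{\mathbb{C}}\big(\Sigma \times \mathcal{M}_{0,\delta_\beta}(X,\beta;\mu)\big) = 3$. However, the concluding genericity step --- that the Gromov limits land only in the top-dimensional stratum and avoid any sub-stratum of real codimension $\geq 2$ --- proves too much. The stratum $\mathcal{B}^1_2$, in which two spheres attach directly to $\Sigma$, also has complex dimension $2$, and yet it is one of the strata that genuinely contributes to the correction term (Lemma~\ref{lm4}). A bare codimension comparison therefore cannot separate the chain-$2$ configuration from $\mathcal{B}^1_2$, so your argument, as written, would incorrectly exclude the latter as well.

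What actually distinguishes the two is the rank of the gluing-parameter bundle at the nodes where the bubble tree meets $\Sigma$. In the chain-$2$ stratum only one sphere touches $\Sigma$, so the relevant gluing parameter sits in $T\Sigma \otimes \mathbb{L}$, of complex rank $1$; the obstruction $\mathcal{H}^{0,1}_\Sigma \otimes \mathrm{ev}^*TX$ has rank $4$, and since $2 + 1 < 4$, for a generic $\nu$ the affine map $v \mapsto \alpha(v) + \nu$ has no small zeros over this stratum. Over $\mathcal{B}^1_2$ there are two $\Sigma$-nodes, so the gluing parameter has rank $2$, and $2 + 2 = 4$ gives the expected zero-dimensional solution set. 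The paper's own proof is a much terser degree-of-freedom count --- it records that $\delta_{\beta_1}+\delta_{\beta_2} = \delta_\beta - 1$ already exhausts the point constraints, so there is no remaining freedom for the free marked point (the $\Sigma$-node on the first sphere), which must go to wherever $\Sigma$ is mapped --- and it likewise leaves the obstruction bookkeeping implicit. To make your version correct you should replace the ``codimension $\geq 2$ is avoided'' claim with the comparison of $\dim(\text{stratum}) + \mathrm{rank}(\text{gluing bundle})$ against $\mathrm{rank}\big(\mathcal{H}^{0,1}_\Sigma \otimes \mathrm{ev}^*TX\big)$, which is what actually singles out the chain-$2$ configuration.
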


\noindent \textbf{Proof:} This follows from a dimension counting argument. Suppose the degree on the two spheres are $\beta_1$ and $\beta_2$. Since  
\[ \langle x_1, \beta_1 \rangle -1 + \langle x_1, \beta_2 \rangle -1 = \langle x_1, \beta  \rangle -2 \]
we do not have any further degree of freedom to place the free marked point (the free marked point has to go to 
where ever the $\Sigma$ is mapped to). \qed \\  

\hf \hf Finally, let us describe the boundary strata where two $S^2$ are attached to the principal 
component as described in the following picture. 
\begin{figure}
[!ht]
  \begin{minipage}[b]{0.3\textwidth}
  \includegraphics[scale=0.4]{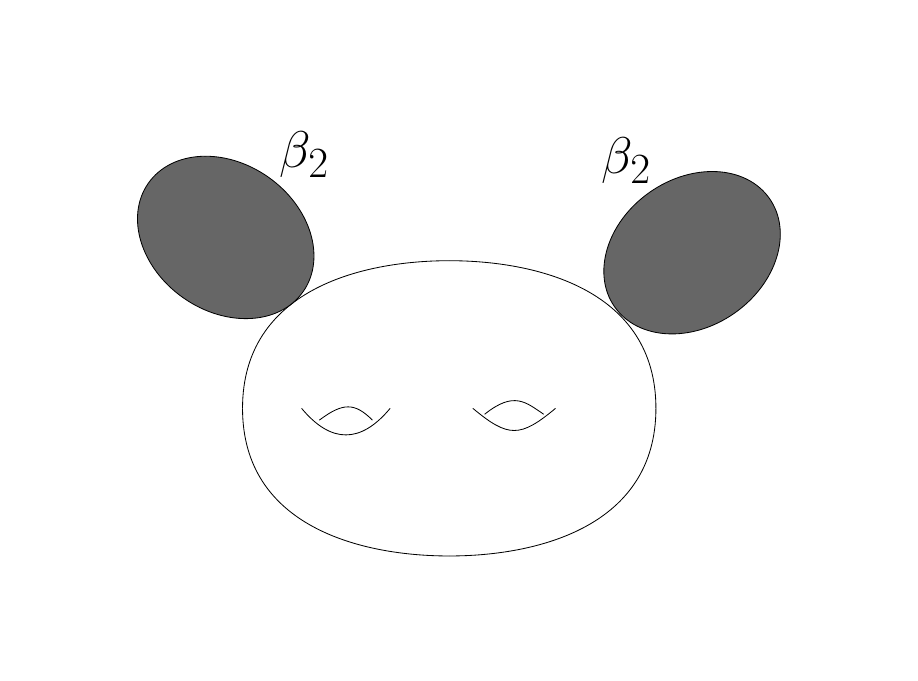}    
  \caption{The stratum $\mathcal{B}^1_2$.}
  \label{fig_b12}
  \end{minipage}%
\end{figure}
This space is identified with  
\[(\Sigma \times \Sigma-\Delta_{\Sigma}) \times  \mathcal{V}_2(\mu), \] 
where 
$\Delta_{\Sigma}$ is the diagonal inside $\Sigma \times \Sigma$ and 
$\mathcal{V}_2(\mu)$ is the space of 
two component degree $\beta$ rational curves with $\delta_{\beta}-1$ marked points (distributed on the domain, which is a wedge of two spheres) 
going to the constraints $\mu$. We will denote this space as $\mathcal{B}^1_2$.\\  \newpage 

\hf \hf Next, we make another observation about what can not occur in the boundary.

\begin{lmm}
\label{bd4}
Consider the following configuration, 
where three or more $S^2$ are attached to the 
principal component. 
\begin{figure}[!htb]
  \begin{minipage}[b]{0.3\textwidth}
  \includegraphics[scale=0.4]{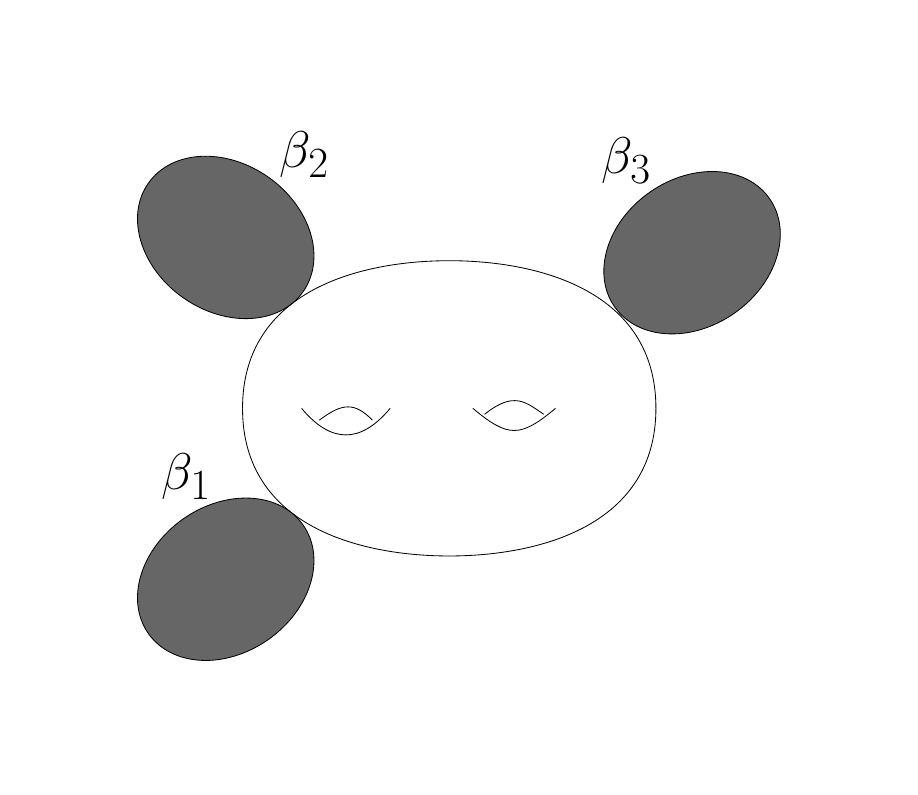}    
  \caption{Not Possible}
  \end{minipage}%
\end{figure}
This configuration can not occur in the boundary. 
\end{lmm}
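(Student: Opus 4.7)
The plan is to repeat the dimension counting strategy used in the proof of Lemma \ref{bd3}, where the total moduli dimension of the rational components exactly matched the number of generic point constraints, leaving no freedom to place the extra node. For Lemma \ref{bd4} with $k \geq 3$ spheres attached to $\Sigma$, I expect the count to fail even more strongly: the spheres cannot even collectively pass through $\delta_\beta - 1$ generic points, so the corresponding boundary stratum is empty for generic $\mu$.

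Concretely, suppose the $k$ attached spheres carry homology classes $\beta_1, \ldots, \beta_k$ with $\sum_{i=1}^{k} \beta_i = \beta$, and distribute the $\delta_\beta - 1$ generic points of $\mu$ with $n_i$ lying on the $i$-th sphere, so that $\sum_{i} n_i = \delta_\beta - 1$. Since the moduli space of unparameterized rational degree $\beta_i$ curves in $X$ has complex dimension $\delta_{\beta_i} = \langle x_1, \beta_i \rangle - 1$, and each generic point imposes one complex codimension, we need $n_i \leq \delta_{\beta_i}$ for every $i$. Summing these inequalities,
\[
\delta_\beta - 1 \,=\, \sum_{i=1}^{k} n_i \,\leq\, \sum_{i=1}^{k} \delta_{\beta_i} \,=\, \langle x_1, \beta \rangle - k \,=\, \delta_\beta + 1 - k,
\]
which rearranges to $k \leq 2$, contradicting the hypothesis $k \geq 3$. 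Hence no such configuration can occur in the boundary.

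The step I expect to be the main (minor) obstacle is confirming that multiply covered spheres cannot circumvent the above bound. Lemma \ref{bd_description_lm} already rules out multi-covers of the principal component $\Sigma$, but in principle an attached bubble sphere could be a multiple cover. However, an $r$-fold cover onto a degree $\beta_i / r$ curve has effective moduli dimension $\delta_{\beta_i / r} < \delta_{\beta_i}$, so the cap $n_i \leq \delta_{\beta_i}$ is only tightened and the contradiction persists. I note also that the argument does not require invoking the auxiliary constraint that all $k$ bubbles must meet at the single image point $q$ of the ghost component (a codimension $k-1$ condition): the generic-point count alone already forces emptiness.
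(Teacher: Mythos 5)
Your proof is correct and takes essentially the same approach as the paper: the paper's own proof is a one-line appeal to the dimension count used for Lemma~\ref{bd3}, and you have simply made that count explicit, showing $\delta_\beta - 1 = \sum_i n_i \leq \sum_i \delta_{\beta_i} = \delta_\beta + 1 - k$ forces $k \leq 2$. Your additional observation that multiply covered bubble components only tighten the inequality $n_i \leq \delta_{\beta_i}$ is a welcome explicit check of a point the paper leaves implicit.
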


\noindent \textbf{Proof:} Follows from a dimension counting argument, similar to the one given in the proof of Lemma \ref{bd3}. \qed \\ 

\hf \hf Hence, we have obtained a complete description of the boundary strata. 

\begin{lmm}
The total boundary strata $\mathcal{B}$ is the union of the spaces $\mathcal{B}^1_1$, 
$\mathcal{B}^2_1$, $\mathcal{B}^3_1$ and $\mathcal{B}^1_2$, i.e  
\begin{align}
\label{b_union_strata}
\mathcal{B} & = \mathcal{B}^1_1 \cup \mathcal{B}^2_1 \cup \mathcal{B}^3_1 \cup \mathcal{B}^1_2.    
\end{align}
\end{lmm}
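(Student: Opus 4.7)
The plan is to assemble the claim by combining the prior lemmas into a single exhaustive case analysis. The starting point is Lemma \ref{bd_description_lm}, which already restricts attention to bubble maps whose restriction to the principal component $\Sigma$ is constant and which carry at least one $S^2$ attached as a tree. So the only remaining task is to classify the combinatorial shape of the tree of spheres and, for each shape, read off the stratum.

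First I would use Lemma \ref{bd4} to conclude that at most two $S^2$ components can be attached directly to $\Sigma$, and Lemma \ref{bd3} to conclude that no $S^2$ can be attached to another $S^2$. Together these force the bubble tree to be either (a) a single $S^2$ glued to $\Sigma$ at one nodal point or (b) two distinct $S^2$'s glued to $\Sigma$ at two distinct nodal points. Since all $\delta_\beta - 1$ marked points must lie on the non-constant component(s), this exhausts the possible configurations coming from Lemma \ref{bd_description_lm} case \eqref{bd_inv_case}.

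For shape (a), the configuration is parametrized by the attaching point $x \in \Sigma$ and by the stable rational map $[u,\overline{y},y] \in \mathcal{M}_{0,\delta_\beta}(X,\beta;\mu)$, with the free marked point $y$ glued to $x$. I would partition this product $\Sigma \times \mathcal{M}_{0,\delta_\beta}(X,\beta;\mu)$ into three disjoint pieces according to (i) whether $du|_y \neq 0$, and, if $du|_y = 0$, (ii) whether $x$ is one of the six Weierstrass points of $(\Sigma,j)$. By construction these three pieces are exactly $\mathcal{B}^1_1$, $\mathcal{B}^2_1$, and $\mathcal{B}^3_1$ as defined in \eqref{b11}, \eqref{b21}, \eqref{b31}. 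For shape (b), the configuration is parametrized by an unordered pair of distinct attaching points on $\Sigma$ together with a two-component rational curve in $\mathcal{V}_2(\mu)$, which is exactly the stratum $\mathcal{B}^1_2$ described in Figure \ref{fig_b12}.

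The argument is essentially a bookkeeping exercise, so I do not expect a substantive obstacle; the only point deserving care is to check that the three subdivisions in shape (a) are genuinely disjoint and genuinely exhaustive (i.e.\ that $\Sigma = \Sigma^* \sqcup \{z_1,\dots,z_6\}$ and that the conditions $du|_y \neq 0$ and $du|_y = 0$ partition $\mathcal{M}_{0,\delta_\beta}(X,\beta;\mu)$), and that no additional stratum arises from the free marked point coinciding with a constrained marked point, which is excluded by the genericity of the points $\mu$. Combining the two shapes yields the decomposition \eqref{b_union_strata}.
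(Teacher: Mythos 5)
Your proof is correct and takes essentially the same route as the paper: the paper's own argument is a one-line appeal to the definitions of the $\mathcal{B}^i_j$ together with Lemmas \ref{bd3} and \ref{bd4}, and your write-up simply makes the implicit case analysis (at most two spheres attached directly to $\Sigma$, no sphere-on-sphere, partition of the one-sphere case by $du|_y \neq 0$ versus $du|_y = 0$ and by Weierstrass vs.\ non-Weierstrass attaching point) explicit. No gap.
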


\noindent \textbf{Proof:} Follows from the definitions of $\mathcal{B}^i_j$ 
and Lemma \ref{bd3} and \ref{bd4}. \qed 


\subsection{Calculation of the excess boundary contribution to the Symplectic Invariant} 
\verb+ +\\
\hf \hf We will now compute the boundary contribution by using (and modifying) the results (and proofs) of Zinger's thesis \cite{zinger_phd}. 
Let us first give a brief idea of what we will be doing. 
In his thesis \cite{zinger_phd}, Zinger computes the required 
boundary contribution when the target space $X:= \mathbb{P}^2$. The justification of 
the boundary contribution is as follows. In \cite[Chapter 8]{zinger_phd}, he expresses the 
relevant boundary contribution as the number of (small) zeros of an affine bundle 
map between relevant bundles. In \cite[Chapter 9]{zinger_phd} he computes this number, by using 
the topological formulas obtained in \cite[Chapter 5]{zinger_phd}. 
The justification of the relevant Theorems in \cite[Chapter 8]{zinger_phd} 
is as follows. He uses the results 
of \cite[Chapter 7]{zinger_phd} (Theorem 7.2 in particular). Furthermore, the proof of Theorem \cite[Theorem 7.2]{zinger_phd} 
follows from the results of 
\cite[Chapter 3]{zinger_phd} (Theorem 3.29 in particular).
We now note the following important fact. 
The theorems in \cite[Chapter 7 and 8]{zinger_phd} 
are applicable when the target manifold is $\mathbb{P}^2$. 
However, the theorems of \cite[Chapter 3]{zinger_phd} (Theorem 3.29 in particular) are 
not specific to $\mathbb{P}^2$; they are applicable to any Symplectic manifold with a regular almost complex structure.  
Hence, to compute and justify the boundary contribution when 
$X$ is a del-Pezzo surface, we claim that the relevant Theorems of \cite[Chapter 7 and 8]{zinger_phd} 
continue to hold 
when $\mathbb{P}^2$ is replaced by a del-Pezzo surface. The justification of this crucial assertion 
is that the proof of these Theorems follow from \cite[Theorem 3.29]{zinger_phd}, which holds for a general 
Symplectic 
manifold with a regular almost complex structure. We then go on to compute the boundary contribution 
by using the topological formula presented in \cite[Chapter 5]{zinger_phd} (analogous to what Zinger does in \cite[Chapter 9]{zinger_phd}). 
This involves knowing the various intersection numbers on the moduli space of rational curves (and hence we use the results of our earlier 
paper \cite{IB_SDM_RM_VP}). \\ 
\hf \hf Finally, we note that in order to be able to use \cite[Theorem 3.29]{zinger_phd}, we need to show 
that 
del-Pezzo surfaces admit a nice family of metrics 
that are constructed by Zinger in \cite[Lemma 6.1]{zinger_phd} for $\mathbb{P}^2$. 
This is needed so that we can use the results of \cite[Theorem 3.29]{zinger_phd} 
(see the discussion on \cite[Chapter 7, Page 136, Paragraph 3]{zinger_phd}). 
In section \ref{nice_metric_section} we explain why del-Pezzo surfaces admit a 
nice family of metrics (this is Lemma \ref{nice_metric}). \\
\hf \hf We will now get into the details of the procedure we have described above. 
Let us first recapitulate the definition of the quantity $s_{\Sigma}^m$ 
defined by Zinger in \cite[Section 7.3, Page 140]{zinger_phd}. 
It is a section of the following bundle 
\[ s_{\Sigma}^m \in \Gamma (\Sigma, T^*\Sigma^{\otimes m} \otimes \mathcal{H}^{0,1}_{\Sigma}). \]
Let $\mathcal{H}^{0,1}_{\Sigma} \longrightarrow \Sigma$ be the bundle of $(0,1)$ forms on $\Sigma$. 
Equip $\Sigma$ with a metric $g$
and let $\{\psi_j\}_{j=1}^N$ 
be an orthonormal basis for $\mathcal{H}^{0,1}_{\Sigma}$. Then, given a point 
$x \in \Sigma$ and  
a tangent vector $w \in T_x\Sigma$, we define 
\begin{align*}
\{s_{\Sigma}^m(x)\}(w)&:= \sum_{j=i}^{N} \overline{\{ D^m \psi_j(x) \}(w)} \psi_j(x),
\end{align*}
where $D$ denotes the covariant derivative. In general, $s^m_{\Sigma}$ depends 
on the metric $g$ if $m>1$; however, $s^1_{\Sigma}$ does not depend on the metric, but only depends on the Riemann surface $(\Sigma,j)$. 
By \cite[Page 246]{GH}, $s^1_{\Sigma}(x)$ does not vanish and hence spans a sub-bundle of $\mathcal{H}^{0,1}_{\Sigma}$; we denote this sub-bundle 
by $\mathcal{H}_{\Sigma}^+$ and its orthogonal complement by $\mathcal{H}_{\Sigma}^-$. 
We denote the projection of $s^{(m)}_{\Sigma}$ onto the $\mathcal{H}_{\Sigma}^+$ and $\mathcal{H}_{\Sigma}^-$ components as 
$s^{(m,+)}_{\Sigma}$ and $s^{(m, -)}_{\Sigma}$ respectively. We note that the bundles $\mathcal{H}^{0,1}_{\Sigma}$
$\mathcal{H}_{\Sigma}^+$ and $\mathcal{H}_{\Sigma}^-$ are all trivial. 
We are now ready to compute the boundary contributions. \\ 

\begin{lmm}
\label{lm1}
The contribution from $\mathcal{B}^1_1$ (which we denote as $n^1_1$) 
is given by 
\begin{align}
n^1_1 & = \frac{2(\hat{x}_1 \cdot \hat{x}_1)}{(\beta\cdot \hat{x}_1)} n_{0, \beta} \nonumber\\
& -\frac{1}{(\beta\cdot \hat{x}_1)}\sum_{
\substack{\beta_1+ \beta_2\,=\, \beta, \\ \beta_1, \beta_2 \neq 0} }
\binom{\delta_{\beta}-1}{\delta_{\beta_1}} n_{0, \beta_1} n_{0, \beta_2} (\beta_1 \cdot \beta_2)(\beta_1 \cdot \hat{x}_1)
(\beta_2 \cdot \hat{x}_1). \label{c1x1_revised_again}
\end{align}
\end{lmm}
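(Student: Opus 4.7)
The plan is to follow Zinger's approach from \cite{zinger_phd}, adapted to del-Pezzo surfaces, and express $n^1_1$ as the signed count of zeros of an affine bundle map associated to the linearization of $\overline{\partial}$ at the bubble maps in $\mathcal{B}^1_1$. An element of $\mathcal{B}^1_1$ consists of a rational curve $[u,\overline{y},y] \in \mathcal{M}_{0,\delta_{\beta}}(X,\beta;\mu)$ with the free marked point $y$ glued to a point $x \in \Sigma$, subject to $du|_y \neq 0$. Near such a configuration, by the gluing framework of \cite[Chapter 3]{zinger_phd}, the approximately holomorphic maps obtained from a smoothening of the nodal domain give rise to sections of the obstruction bundle, and the leading order behavior is governed by an explicit linear map.

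The first step is to invoke the results of \cite[Chapter 7 and Chapter 8]{zinger_phd}, which express the contribution from a boundary stratum of this type as the number of zeros of an affine bundle map of the form $\alpha_1$. Although these results are stated in the thesis for $X = \mathbb{P}^2$, the proofs reduce to \cite[Theorem 3.29]{zinger_phd}, which is valid for any symplectic manifold admitting a regular almost complex structure together with a nice family of metrics. Genus two regularity of the complex structure on del-Pezzo surfaces (established in Section \ref{genus_two_regular}) and the existence of a nice family of metrics (Lemma \ref{nice_metric} in Section \ref{nice_metric_section}) together ensure that the arguments go through without change when the target is a del-Pezzo surface.

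The second step is to apply the topological formula of \cite[Corollary 5.17]{zinger_phd} to compute this count. I would make the identifications
\[
L_{\Sigma} := T\Sigma, \qquad V_{\Sigma} := \mathcal{H}^{0,1}_\Sigma\times \Sigma, \qquad L_{\mathcal{M}} := \mathcal{L}_{\delta_{\beta}}, \qquad V_{\mathcal{M}} := \mathrm{ev}_{\delta_{\beta}}^*TX,
\]
and observe that $V_{\Sigma}$ is trivial so that $c_1(V_{\Sigma}) = 0$. After integrating out the $\Sigma$-factor (the tangent-line/evaluation geometry yields a factor of $2$ from the pairing with $s^1_\Sigma$), the count reduces to $2\langle c_1(\mathcal{L}^*_{\delta_{\beta}})\,\mathrm{ev}^*_{\delta_{\beta}}(x_1), [\overline{\mathcal{M}}]\rangle$.

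The final step is to substitute the explicit formula for $\langle c_1(\mathcal{L}^*_{\delta_{\beta}})\,\mathrm{ev}^*_{\delta_{\beta}}(x_1),[\overline{\mathcal{M}}]\rangle$ on the moduli space of rational curves in $X$, which is computed in our earlier paper \cite{IB_SDM_RM_VP}. Plugging this in and rewriting $x_1\cdot x_1$ and $\beta\cdot x_1$ in Poincar\'e dual form yields exactly \eqref{c1x1_revised_again}. The main obstacle is the bookkeeping for step one: tracing through the proofs of the relevant results in \cite[Chapter 7, Chapter 8]{zinger_phd} to confirm that each invocation of $\mathbb{P}^2$-specific input is either inessential or can be replaced by an appeal to \cite[Theorem 3.29]{zinger_phd}, together with the nice-metric and regularity statements for del-Pezzo targets; once that is in place, the evaluation of the resulting intersection number is a direct application of our formula from \cite{IB_SDM_RM_VP}.
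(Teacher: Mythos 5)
Your overall strategy matches the paper's: identify the contribution from $\mathcal{B}^1_1$ with the signed count of small zeros of an affine bundle map $v\mapsto \alpha^1_1(v)+\nu$, reduce Zinger's $\mathbb{P}^2$-specific corollaries (Chapters 7--8) to Theorem 3.29 via genus-two regularity and the nice-metric family for del-Pezzo targets, and then evaluate the resulting zero count using Corollary 5.17 together with the intersection number computed in \cite{IB_SDM_RM_VP}. The skeleton is correct, and the identifications $L_\Sigma = T\Sigma$, $V_\Sigma = \Sigma\times\mathcal{H}^{0,1}_\Sigma$, $L_{\mathcal{M}} = \yii$, $V_{\mathcal{M}} = \textnormal{ev}^*TX$ are exactly what the paper uses.

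However, there is a genuine gap in your step where you assert that ``the count reduces to $2\langle c_1(\mathcal{L}^*_{\delta_\beta})\mathrm{ev}^*_{\delta_\beta}(x_1),[\overline{\mathcal{M}}]\rangle$.'' Corollary 5.17 of Zinger's thesis applies to a bundle map that may fail to be injective, and the topological formula it returns contains an additional correction supported on the degeneracy locus. Concretely, the paper's invocation of Corollary 5.17 produces a term $-2\langle \mathrm{ev}^*x_1,[\mathcal{Z}]\rangle$, where $\mathcal{Z}\subset \Sigma\times \overline{\mathcal{M}}_{0,\delta_\beta}(X,\beta;\mu)$ is the locus on which $\alpha^1_1$ fails to be injective. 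Because $s^1_\Sigma$ is nowhere vanishing, $\mathcal{Z}$ is exactly $\Sigma$ times the finite set of cuspidal rational curves through $\mu$, and one must argue separately (as the paper does) that $\langle\mathrm{ev}^*x_1,[\mathcal{Z}]\rangle = 0$: geometrically, asking the cusp point to additionally lie on a cycle dual to $x_1$ is an overdetermined condition. Without this step you have not justified that the degeneracy contribution vanishes, and the formula you write down is not what Corollary 5.17 actually gives. Relatedly, note the sign: the paper works with $\langle c_1(\yii^*)\mathrm{ev}^*(-x_1),\cdot\rangle$, whose double is \eqref{c1x1_revised_again}; if you keep $+x_1$ and multiply the $\langle c_1(\mathcal{L}^*_{\delta_\beta})\mathrm{ev}^*_{\delta_\beta}(x_1),\cdot\rangle$ formula from \cite{IB_SDM_RM_VP} by $2$, you obtain the negative of the lemma's right-hand side, so you should track the orientation/dualization conventions (equivalently, whether you pair with $c_1$ of the tangent or cotangent universal line) carefully.
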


\noindent \textbf{Proof:} We claim that the contribution from $\mathcal{B}_1^1$ is  
the same as the number of (small) zeros of the affine bundle map 
$v \longrightarrow \alpha^1_1(v) + \nu$, where $\nu$ is a generic perturbation and  $\alpha^1_1$ is following the linear bundle map  
\begin{align*}
\alpha^1_1 & \in \Gamma (\Sigma \times \overline{\mathcal{M}}_{0, \delta_{\beta}}(X, \beta; \mu); \textnormal{Hom}(T\Sigma \otimes \mathbb{L}; \mathcal{H}^{0,1}_{\Sigma} \otimes 
\textnormal{ev}^*TX)) \qquad \textnormal{given by} \\ 
\{\alpha^1_1(x, [u, \overline{y}; y])\} (w \otimes v) &:=  \{s_{\Sigma}^1 (x)\}(w) \otimes \{du|_{y}\}(v).
\end{align*}
Here $\mathbb{L} \longrightarrow \overline{\mathcal{M}}_{0, \delta_{\beta}}(X, \beta; \mu)$ is the universal tangent bundle 
at the last (free) marked point. \\
\hf \hf Let us now justify this claim. 
This claim is precisely \cite[Corollary 8.7]{zinger_phd} when $X:=\mathbb{P}^2$ (also see the explanation in \cite[section 8.9]{zinger_phd}). 
We claim that the statement of \cite[Corollary 8.7]{zinger_phd} 
is true even when $\mathbb{P}^2$ is replaced   
by $X$, a del-Pezzo surface. To see why, we unwind the proof of \cite[Corollary 8.7]{zinger_phd}. 
The proof follows from \cite[Theorem 7.3]{zinger_phd}
(which is for $\mathbb{P}^2$), which in turn follows from 
\cite[Theorem 3.29]{zinger_phd}
(which is for a general Symplectic manifold with a regular almost complex structure). 
Hence, using \cite[Theorem 3.29]{zinger_phd}, 
we conclude that the statement of \cite[Theorem 7.3]{zinger_phd} holds for any del-Pezzo surface 
and hence the 
statement of \cite[Corollary 8.7]{zinger_phd}  holds as well for a general del-Pezzo surface.\\
\hf \hf We now actually compute the number of zeros of the affine bundle map. 
We will closely follow the proof of \cite[Lemma 9.5]{zinger_phd}, where he computes this number for the special case when $X:= \mathbb{P}^2$. 
The proof uses the topological results of \cite[Chapter 5]{zinger_phd}; 
these are general statements about vector bundles and 
manifolds and are not specific to moduli spaces of curves. 
Hence, the target space plays no role while using the results of \cite[Chapter 5]{zinger_phd}).\\ 
\hf \hf By \cite[Corollary 5.17]{zinger_phd}, 
the number of (small) zeros of the affine bundle map 
\[ v \longrightarrow \alpha^1_1 (v) + \nu \]
is given by 
\begin{align}
n^1_1 &= 2 
\langle c_1(\yii^*)\textnormal{ev}^*(-x_1), ~[\overline{\mathcal{M}}_{0, \delta_{\beta}}(X, \beta; \mu)] \rangle 
+ (2 \hat{x}_1^2 - 2 x_2([X])) n_{0, \beta} - 2 \langle \textnormal{ev}^*x_1, ~[\mathcal{Z}] \rangle, \label{n11_revised}
\end{align}
where $\mathcal{Z}$ is the subset of $\Sigma \times \overline{\mathcal{M}}_{0, \delta_{\beta}}(X, \beta; \mu)$ where the 
map $\alpha^1_1$ fails to be injective. 
To see how we obtain \eqref{n11_revised}, we use \cite[Corollary 5.17]{zinger_phd}, and set 
\begin{align*}
L_{\Sigma} & := T\Sigma \longrightarrow \Sigma, \qquad 
V_{\Sigma} := \Sigma \times \mathcal{H}^{0,1} \longrightarrow \Sigma, \\
L_{\mathcal{M}} & := \yii \longrightarrow \overline{\mathcal{M}}_{0, \delta_{\beta}}(X, \beta; \mu), \qquad \textnormal{and} 
\qquad V_{\mathcal{M}} := 
\textnormal{ev}^*TX \longrightarrow \overline{\mathcal{M}}_{0, \delta_{\beta}}(X, \beta; \mu).  
\end{align*}
Here $\yii$ and $\textnormal{ev}$ denote the universal tangent bundle and the evaluation map at the last (free) marked point respectively. 
Note that $V_{\Sigma}:= \Sigma \times \mathcal{H}^{0,1} \longrightarrow \Sigma$ is a trivial bundle, hence 
$c_1(V_{\Sigma}) =0$.\\ 
\hf\hf Next, we claim that the first intersection number appearing in the right hand side of \eqref{n11_revised} is given by 
\begin{align}
\langle c_1(\yii^*)\textnormal{ev}^*(-x_1), ~[\overline{\mathcal{M}}_{0, \delta_{\beta}}(X, \beta; \mu)] \rangle &\,=\,
\frac{(\hat{x}_1 \cdot \hat{x}_1)}{(\beta\cdot \hat{x}_1)} n_{0, \beta} \nonumber\\
& -\frac{1}{2 (\beta\cdot \hat{x}_1)}\sum_{
\substack{\beta_1+ \beta_2\,=\, \beta, \\ \beta_1, \beta_2 \neq 0} }
\binom{\delta_{\beta}-1}{\delta_{\beta_1}} n_{0, \beta_1} n_{0, \beta_2} (\beta_1 \cdot \beta_2)(\beta_1 \cdot \hat{x}_1)
(\beta_2 \cdot \hat{x}_1). \label{c1x1_revised}
\end{align}
A self contained proof of this is given in section \ref{ITC_genus2} (it is also proved in our paper \cite{IB_SDM_RM_VP}). \\
\hf \hf It remains to compute the last term in the right hand side of \eqref{n11_revised}. We claim that this number is zero, i.e. 
\begin{align}
\langle \textnormal{ev}^*(x_1), ~[\mathcal{Z}] \rangle & =0. \label{z_zero}
\end{align}
To see why, we first take a closer look at what is $\mathcal{Z}$. This is the subset of  
$\Sigma \times \overline{\mathcal{M}}_{0, \delta_{\beta}}(X, \beta; \mu)$ where the 
map $\alpha^1_1$ fails to be injective (or equivalently in this case, where $\alpha^1_1$ vanishes). 
Since $s^1_{\Sigma}$ is nowhere vanishing (\cite[Page 246]{GH}), 
this implies that $du|_y$ has to vanish. Hence, 
we are looking at the space of rational curves passing through $\delta_{\beta}-1$ generic points and that have a cusp; this 
is a finite set of points (since we take the product with $\Sigma$, $\mathcal{Z}$ is a one dimensional space). Now 
\begin{align*}
\langle \textnormal{ev}^*(x_1), ~[\mathcal{Z}] \rangle 
\end{align*}
geometrically denotes the number of rational curves through $\delta_{\beta}-1$ points with a cusp lying on a cycle that is Poincar\'{e} dual to 
$x_1$; this is clearly zero (since requiring the cusp to lie on some cycle is an extra condition). Hence, the number is zero. 
Combining \eqref{c1x1_revised}, \eqref{z_zero} and plugging it in \eqref{n11_revised} gives us the value of $n^1_1$. \qed \\

\hf \hf Next, we will compute the contribution from $\mathcal{B}^2_1$. 

\begin{lmm}
\label{lm2}
The contribution from $\mathcal{B}^2_1$ (which we denote as $2 n^2_1$) 
is given by 
\begin{align}
2 n^2_1 & = 4\Big(x_2([X]) - \frac{\hat{x}_1\cdot \hat{x}_1}{\beta\cdot \hat{x}_1} \Big) n_{0, \beta} +
4\sum_{\substack{\beta_1+ \beta_2= \beta, \\ \beta_1, \beta_2 \neq 0}} \binom{\delta_{\beta}-1}{\delta_{\beta_1}}
n_{0, \beta_1} n_{0, \beta_2} (\beta_1 \cdot \beta_2) \Big(
\frac{(\beta_1 \cdot \hat{x}_1) (\beta_2 \cdot \hat{x}_1)}{2 (\beta \cdot \hat{x}_1)} -1 \Big). \label{c_beta_revised_again}
\end{align}
\end{lmm}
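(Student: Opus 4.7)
My approach mirrors the proof of Lemma \ref{lm1}. I would first express the contribution from $\mathcal{B}^2_1$ as the number of small zeros of an appropriate affine bundle map, and then evaluate that count using the topological formula from \cite[Chapter 5]{zinger_phd} together with the characteristic number of rational cuspidal curves computed in \cite{IB_SDM_RM_VP}.

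Since points of $\mathcal{B}^2_1$ satisfy $du|_y = 0$ and lie over the non-Weierstrass locus $\Sigma^*$, the relevant local model must be built from the second jet $\mathcal{D}^2 u|_y$ and the section $s^{2,-}_\Sigma$, which is nowhere vanishing on $\Sigma^*$ (this is exactly why the Weierstrass points are excluded: at a Weierstrass point $s^{2,-}_\Sigma$ would vanish, and the appropriate model would become degenerate, which is the content of Lemma \ref{lm2}'s sibling computing $\mathcal{B}^3_1$). Concretely, I would introduce the linear bundle map
\begin{align*}
\alpha^2_1 \in \Gamma\!\left(\Sigma^* \times \overline{\mathcal{M}}_{0,\delta_{\beta}}(X,\beta;\mu),\; \textnormal{Hom}\bigl(T\Sigma^{\otimes 2} \otimes \yii^{\otimes 2},\; \mathcal{H}^{-}_\Sigma \otimes \textnormal{ev}^*TX\bigr)\right)
\end{align*}
defined by $\{\alpha^2_1(x,[u,\overline{y},y])\}(w^{\otimes 2}\otimes v^{\otimes 2}) := \{s^{2,-}_\Sigma(x)\}(w^{\otimes 2}) \otimes \{\mathcal{D}^2 u|_y\}(v^{\otimes 2})$. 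The identification of the boundary contribution with the signed count of small zeros of $\alpha^2_1 + \nu$ is the analog of \cite[Corollary 8.14]{zinger_phd}; exactly as in Lemma \ref{lm1}, this reduces via \cite[Theorem 7.3]{zinger_phd} to \cite[Theorem 3.29]{zinger_phd}, which is valid for a general Symplectic manifold with a regular almost complex structure. Genus-two regularity on $X$ is provided by section \ref{genus_two_regular} and the required nice family of metrics by Lemma \ref{nice_metric}, so the extension from $\mathbb{P}^2$ to any del-Pezzo surface is immediate.

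To evaluate the zero count I would then invoke \cite[Corollary 5.17]{zinger_phd}, which is a general bundle-theoretic statement independent of the target. Setting $L_\Sigma := T\Sigma$, $V_\Sigma := \Sigma \times \mathcal{H}^{-}_\Sigma$, $L_{\mathcal{M}} := \yii$, and $V_{\mathcal{M}} := \textnormal{ev}^*TX$ (with appropriate tensor exponents), the count splits into a bulk Chern-class integral on $\overline{\mathcal{M}}_{0,\delta_\beta}(X,\beta;\mu)$ and a correction from the locus $\{\mathcal{D}^2 u|_y = 0\}$; since the cusp condition already absorbs the free point on $\Sigma^*$, a dimension count kills the bulk term, and the surviving contribution is precisely $2C_\beta$, weighted by the outer factor of $2$ from \cite[(9.20)]{zinger_phd}. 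Using the explicit formula
\begin{align*}
C_\beta = \Bigl(x_2([X]) - \frac{\hat{x}_1 \cdot \hat{x}_1}{\beta \cdot \hat{x}_1}\Bigr) n_{0,\beta} + \sum_{\substack{\beta_1 + \beta_2 = \beta \\ \beta_1,\beta_2 \neq 0}} \binom{\delta_\beta - 1}{\delta_{\beta_1}} n_{0,\beta_1} n_{0,\beta_2} (\beta_1 \cdot \beta_2) \Bigl(\frac{(\beta_1 \cdot \hat{x}_1)(\beta_2 \cdot \hat{x}_1)}{2(\beta \cdot \hat{x}_1)} - 1\Bigr)
\end{align*}
from \cite{IB_SDM_RM_VP} (and from Kock \cite{JKoch2} when $X = \mathbb{P}^1 \times \mathbb{P}^1$), this yields $2 n^2_1 = 4 C_\beta$, which is precisely \eqref{c_beta_revised_again}. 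The principal obstacle is the careful bookkeeping of the two factors of $2$, one coming from the local multiplicity of $\mathcal{D}^2 u|_y$ at a cusp and the other from the outer multiplicity in Zinger's decomposition of $\mathrm{CR}_{2,\beta}$, and checking that the bulk Chern integral analogous to the one in \eqref{n11_revised} vanishes here rather than producing new terms.
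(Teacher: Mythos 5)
Your high-level scaffolding matches the paper (affine bundle map, small-zero count, extension from $\mathbb{P}^2$ to a del-Pezzo surface via Zinger's Theorem 3.29, regularity from section \ref{genus_two_regular} and the nice metric from Lemma \ref{nice_metric}), but the technical core of your proposed computation diverges from what the paper does, and I think the divergence is a genuine gap rather than an alternative valid route.

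The key problem is the base space of $\alpha^2_1$. You set it up over $\Sigma^* \times \overline{\mathcal{M}}_{0,\delta_\beta}(X,\beta;\mu)$, i.e.\ the \emph{full} moduli space of rational maps with the Weierstrass points deleted from $\Sigma$. The paper (following \cite[Corollary 8.14]{zinger_phd}) instead defines $\alpha^2_1$ over $\Sigma \times \overline{\mathcal{S}}_1(\mu)$, where $\overline{\mathcal{S}}_1(\mu)$ is the \emph{cuspidal} locus $\{du|_y = 0\}$, and keeps all of $\Sigma$. This matters. First, the second jet $d^2 u|_y$ is a well-defined element of $\textnormal{Hom}(\mathbb{L}^{\otimes 2}, \textnormal{ev}^*TX)$ only along the cuspidal locus; over the full moduli space it is not a tensor of the form you write (you appeal to the covariant $\mathcal{D}^2 u$, which is metric-dependent and does not naturally land where you need it). Second, the dimension count doesn't close with your base: $\overline{\mathcal{M}}_{0,\delta_\beta}(X,\beta;\mu)$ already has complex dimension $2$, so $\Sigma^* \times \overline{\mathcal{M}}_{0,\delta_\beta}(X,\beta;\mu)$ has complex dimension $3$, and the zero set of a rank-$1$-to-rank-$2$ affine bundle map over it is $2$-dimensional, not a finite set. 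The ``dimension count kills the bulk term'' step is therefore not a rigorous reduction; you cannot even state a bulk Chern-class integral of the right degree.

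Relatedly, the paper does not invoke \cite[Corollary 5.17]{zinger_phd} here at all. It handles the Weierstrass-point degeneracy of $s^{(2,-)}_\Sigma$ by twisting: one replaces $T\Sigma^{\otimes 2}$ by $\tilde{T}\Sigma := T\Sigma^{\otimes 2} \otimes \mathcal{O}(z_1) \otimes \cdots \otimes \mathcal{O}(z_6)$, obtains a nowhere-vanishing $\tilde{s}_\Sigma$ and an everywhere-injective $\tilde{\alpha}^2_1$, and then applies \cite[Lemma 5.14]{zinger_phd}. The factor of $2$ in $n^2_1 = 2|\mathcal{S}_1(\mu)|$ then falls out of the degree of the twisted line bundle $\tilde{T}\Sigma$ (namely $2(2-2g)+6 = 2$), not from any ``local multiplicity of $\mathcal{D}^2 u|_y$ at a cusp'' as you propose. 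Finally, deleting the Weierstrass points from the base (as you do) rather than twisting does not by itself justify ignoring them: one must argue that for a generic perturbation $\nu$ the small zeros do not concentrate over $\{z_1,\ldots,z_6\}$, which is precisely what the twisting construction in the paper (together with the separate treatment of $\mathcal{B}^3_1$) is designed to make transparent.
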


\begin{rem}
It will be clear shortly why we denoted the contribution as $2 n^2_1$ and not $n^2_1$. 
\end{rem}

\noindent \textbf{Proof:} Let us first define $\mathcal{S}_1(\mu)$ to be the space 
of rational curves with a cusp, i.e. 
\begin{align*}
\mathcal{S}_1(\mu) &:= \{ [u, \overline{y}; y] \in \mathcal{M}_{0,\delta_{\beta}}(X, \beta; \mu): du|_y =0\}. 
\end{align*}
Hence, $\mathcal{B}^2_1 := \Sigma^* \times \mathcal{S}_1(\mu)$, where $\Sigma^*$ is $\Sigma$ minus the  
six distinguished Weierstrass points. \\ 
\hf \hf We claim that the contribution from $\mathcal{B}_1^2$ is  
the same as twice the number of (small) zeros of the affine bundle map 
$v \longrightarrow \alpha^2_1(v) + \nu$, where $\nu$ is a generic perturbation and  $\alpha^2_1$ is following the linear bundle map  
\begin{align*}
\alpha^2_1 & \in \Gamma (\Sigma \times \overline{\mathcal{S}}_1(\mu); \textnormal{Hom}(T\Sigma^{\otimes 2} \otimes \mathbb{L}^{\otimes 2}; 
\mathcal{H}^{-}_{\Sigma} \otimes 
\textnormal{ev}^*TX)) \qquad \textnormal{given by} \\ 
\{\alpha^2_1(x, [u, \overline{y}; y])\} (w \otimes v) &:=  \{s_{\Sigma}^{(2, -)} (x)\}(w) \otimes \{d^2 u|_{y}\}(v).
\end{align*}
\hf \hf Let us now justify this claim. 
This claim is precisely \cite[Corollary 8.14]{zinger_phd} when $X:=\mathbb{P}^2$ (also see the explanation in \cite[section 8.9]{zinger_phd}). 
We claim that the statement of \cite[Corollary 8.14]{zinger_phd} 
is true even when $\mathbb{P}^2$ is replaced   
by $X$, a del-Pezzo surface. To see why, we unwind the proof of \cite[Corollary 8.14]{zinger_phd}. 
The proof follows \cite[Theorem 7.3]{zinger_phd}
(which is for $\mathbb{P}^2$), which in turn follows from 
\cite[Theorem 3.29]{zinger_phd}
(which is for a general Symplectic manifold with a regular almost complex structure). 
Hence, using \cite[Theorem 3.29]{zinger_phd}, 
we conclude that the statement of \cite[Theorem 7.3]{zinger_phd} holds for any del-Pezzo surface 
and hence the 
statement of \cite[Corollary 8.14]{zinger_phd}  holds as well for a general del-Pezzo surface.\\
\hf \hf We now actually compute the number of zeros of the affine bundle map. 
We will now closely follow the discussion in \cite[Section 9.2]{zinger_phd} 
and the proof of \cite[Lemma 9.4]{zinger_phd} in particular.\\ 
\hf \hf Let us first apply the construction described at the beginning of 
\cite[Section 9.2, Page 170]{zinger_phd} (just before \cite[Lemma 9.4]{zinger_phd} is stated).
The linear map $\alpha^2_1$ is injective everywhere except over the zero set of $s^{(2, -)}_{\Sigma}$. 
By \cite[Section 8.5]{zinger_phd}, this section vanishes transversely at $z_1, z_2, \ldots, z_6$ (the six Weierstrass points of $\Sigma$).
This induces a nowhere vanishing section 
\begin{align*}
\tilde{s}_{\Sigma} & \in \Gamma (\Sigma; \textnormal{Hom}(\tilde{T}\Sigma, \mathcal{H}^-_{\Sigma}))  \qquad \textnormal{where} \qquad 
\tilde{T}\Sigma := T\Sigma^{\otimes 2} \otimes \mathcal{O}(z_1) \otimes \mathcal{O}(z_2) \otimes \ldots \otimes \mathcal{O}(z_6).  
\end{align*}
Here $\mathcal{O}(z_m)$ is the holomorphic line bundle on $\Sigma$ corresponding to the divisor $z_m$. 
Let us now construct a new bundle map  $\tilde{\alpha}^2_1$ which is as follows:   
\begin{align*}
\tilde{\alpha}^2_1 & \in \Gamma (\Sigma \times \overline{\mathcal{S}}_1(\mu); \textnormal{Hom}(\tilde{T}\Sigma \otimes \mathbb{L}^{\otimes 2}; 
\mathcal{H}^{-}_{\Sigma} \otimes 
\textnormal{ev}^*TX)) \qquad \textnormal{given by} \\ 
\{\tilde{\alpha}^2_1(x, [u, \overline{y}; y])\} (w \otimes v) &:=  \{ \tilde{s}_{\Sigma}(x)\}(w) \otimes \{d^2 u|_{y}\}(v).
\end{align*}
Now we note that 
the number of small zeros of the affine map 
\[ v \longrightarrow \alpha^2_1(v) + \nu \]
is same as the number of small zeros of the affine map  
\[ v \longrightarrow \tilde{\alpha}^2_1(v) + \nu. \] 
This is because for a generic perturbation, the zeros will not lie over any of the six points $z_1, \ldots, z_6$.\\
\hf \hf We now note that the map $\tilde{\alpha}^2_1$ is injective everywhere. Hence, using \cite[Lemma 5.14]{zinger_phd}, 
we conclude that the number of zeros of this map is given by 
\begin{align}
n^2_1 & = \langle c_1(\mathcal{H}_{\Sigma}^-\otimes \textnormal{ev}^*TX) - c_1(\tilde{T}\Sigma \otimes \yii), [\Sigma \times \overline{\mathcal{S}}_1(\mu)] \rangle \nonumber \\ 
      & = 2 |\mathcal{S}_1(\mu)|. \label{n21_revised}
\end{align}
Note that $\mathcal{S}_1(\mu)$ is a finite set (the cardinality of this set is the number of rational degree $\beta$ curves in $X$ passing through $\delta_{\beta}-1$ points). 
Hence, restricted to $\mathcal{S}_1(\mu)$, the bundle $\textnormal{ev}^*TX$ is trivial. Furthermore, $\mathcal{H}_{\Sigma}^{-}$ is a trivial bundle. 
Hence $\mathcal{H}_{\Sigma}^-\otimes \textnormal{ev}^*TX$ is a trivial bundle on $\Sigma \times \overline{\mathcal{S}}_1(\mu)$ and 
all the Chern classes are zero.\\
\hf \hf It remains to compute the number $|\mathcal{S}_1(\mu)|$. This is computed in our paper \cite{IB_SDM_RM_VP}; the formula we obtain in that 
paper is 
\begin{align}
|\mathcal{S}_1(\mu)| & = \Big(x_2([X]) - \frac{\hat{x}_1\cdot \hat{x}_1}{\beta\cdot \hat{x}_1} \Big) n_{0, \beta} +
\sum_{\substack{\beta_1+ \beta_2= \beta, \\ \beta_1, \beta_2 \neq 0}} \binom{\delta_{\beta}-1}{\delta_{\beta_1}}
n_{0, \beta_1} n_{0, \beta_2} (\beta_1 \cdot \beta_2) \Big(
\frac{(\beta_1 \cdot \hat{x}_1) (\beta_2 \cdot \hat{x}_1)}{2 (\beta \cdot \hat{x}_1)} -1 \Big). \label{c_beta_revised}
\end{align} 
Equation \eqref{c_beta_revised} substituted in \eqref{n21_revised} gives us the value of $n^2_1$. The total contribution from $\mathcal{B}^2_1$ is twice this 
number, i.e. it is $2 n^2_1$. \qed \\

\begin{lmm}
\label{lm3}
The contribution from $\mathcal{B}^3_1$ (which we denote as $6 n^3_1$) 
is given by 
\begin{align}
 18 n^3_1 & = 18\Big(x_2([X]) - \frac{\hat{x}_1\cdot \hat{x}_1}{\beta\cdot \hat{x}_1} \Big) n_{0, \beta} +
18\sum_{\substack{\beta_1+ \beta_2= \beta, \\ \beta_1, \beta_2 \neq 0}} \binom{\delta_{\beta}-1}{\delta_{\beta_1}}
n_{0, \beta_1} n_{0, \beta_2} (\beta_1 \cdot \beta_2) \Big(
\frac{(\beta_1 \cdot \hat{x}_1) (\beta_2 \cdot \hat{x}_1)}{2 (\beta \cdot \hat{x}_1)} -1 \Big). 
\end{align}
\end{lmm}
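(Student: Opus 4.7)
The plan is to follow, almost verbatim, the approach used in the proofs of Lemmas \ref{lm1} and \ref{lm2}; the only new feature is that the stratum $\mathcal{B}^3_1$ is supported precisely on the six Weierstrass points of $\Sigma$, where the section $s^{(2,-)}_\Sigma$ used in the proof of Lemma \ref{lm2} vanishes to first order (by \cite[Section 8.5]{zinger_phd}). Consequently, the leading obstruction at these points has to be captured by a higher-order linearization, and the local vanishing order of $s^{(2,-)}_\Sigma$ produces a multiplicity factor at each Weierstrass point.

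First I would assert, as the analogue of \cite[Corollary 8.14]{zinger_phd} used in Lemma \ref{lm2}, that the contribution from $\mathcal{B}^3_1$ equals $18$ times the number of small zeros of a suitable affine bundle map
\[
\tilde{\alpha}^3_1 + \nu \in \Gamma\bigl(\{z_1,\ldots,z_6\}\times\overline{\mathcal{S}}_1(\mu);\,\textnormal{Hom}(\mathbb{L}^{\otimes 2};\mathcal{H}^-_\Sigma \otimes \textnormal{ev}^*TX)\bigr),
\]
where the numerical factor $18 = 6\cdot 3$ arises as the product of the number of Weierstrass points and the transverse vanishing multiplicity of $s^{(2,-)}_\Sigma$ at each. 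This assertion is exactly \cite[Corollary 8.18]{zinger_phd} when $X = \mathbb{P}^2$; its validity for a del-Pezzo surface is justified exactly as in the proofs of Lemmas \ref{lm1} and \ref{lm2}, namely by unwinding the chain of implications back to \cite[Theorem 3.29]{zinger_phd}, which holds for any symplectic manifold with a regular almost complex structure. The genus two regularity of del-Pezzo surfaces is established in Section \ref{genus_two_regular}, and the existence of the nice family of metrics required by \cite[Theorem 7.3]{zinger_phd} is addressed in Section \ref{nice_metric_section}.

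Next I would compute the number of small zeros itself. Since $\mathcal{S}_1(\mu)$ is finite (its elements are the rational cuspidal degree-$\beta$ curves through $\mu$) and $\{z_1,\ldots,z_6\}$ is zero-dimensional, the bundle map $\tilde{\alpha}^3_1$ is injective everywhere on $\{z_1,\ldots,z_6\}\times\overline{\mathcal{S}}_1(\mu)$. Applying \cite[Lemma 5.14]{zinger_phd} and observing that $\mathcal{H}^-_\Sigma$ is trivial and that $\textnormal{ev}^*TX$ is trivial when restricted to the finite set $\mathcal{S}_1(\mu)$, all the relevant Chern classes vanish and the number of zeros reduces to $\lvert \{z_1,\ldots,z_6\}\times \mathcal{S}_1(\mu)\rvert / 6 = \lvert \mathcal{S}_1(\mu)\rvert$, i.e.\ $n^3_1 = \lvert \mathcal{S}_1(\mu)\rvert$. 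Substituting the formula for $\lvert \mathcal{S}_1(\mu)\rvert$ from \eqref{c_beta_revised} (which is the content of \cite{IB_SDM_RM_VP}) and multiplying by the overall factor of $18$ yields the claimed identity.

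The main obstacle is, once again, the transfer of \cite[Corollary 8.18]{zinger_phd} from $\mathbb{P}^2$ to a del-Pezzo surface, and in particular verifying that the delicate local analysis near a Weierstrass point (where $s^{(2,-)}_\Sigma$ vanishes) uses no feature of $\mathbb{P}^2$ beyond the regularity of the almost complex structure and the availability of a nice family of metrics. With those two ingredients in hand (from Sections \ref{genus_two_regular} and \ref{nice_metric_section}), the rest of the argument is a direct transcription of Zinger's proof, and the remaining computation is a routine application of \cite[Lemma 5.14]{zinger_phd} together with the enumerative input from \cite{IB_SDM_RM_VP}.
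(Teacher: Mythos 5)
Your approach mirrors the paper's (extend \cite[Corollary 8.18]{zinger_phd} to a del-Pezzo surface via \cite[Theorem 3.29]{zinger_phd}, count the zeros of the resulting affine bundle map over a finite set, and plug in the cuspidal count from \cite{IB_SDM_RM_VP}), and your final formula is correct, but the bookkeeping of the factor $18$ contains a genuine inconsistency. Zinger's Corollary 8.18, as used in the paper, asserts that the contribution from $\mathcal{B}^3_1$ equals \emph{three} times the number of small zeros of the affine map $v\mapsto\alpha^3_1(v)+\nu$, where $\alpha^3_1$ is defined over all of $(\Sigma-\Sigma^*)\times\overline{\mathcal{S}}_1(\mu)=\{z_1,\ldots,z_6\}\times\overline{\mathcal{S}}_1(\mu)$. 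Since that base is a finite set of cardinality $6|\mathcal{S}_1(\mu)|$ and the map is injective at every point, the number of small zeros is exactly $6|\mathcal{S}_1(\mu)|$, and the contribution is $3\cdot 6|\mathcal{S}_1(\mu)| = 18|\mathcal{S}_1(\mu)|$. You instead assert that the contribution equals $18$ times the number of zeros, folding the $6$ from the Weierstrass points into the pre-factor, and then your zero count contains an unjustified division, ``$|\{z_1,\ldots,z_6\}\times\mathcal{S}_1(\mu)|/6$''; the number of zeros of an injective affine map over a finite set is the cardinality of that set, not the cardinality divided by $6$. These two errors cancel, so the end result is right, but the argument as written does not hold together: the factor $6$ belongs in the zero count, and only the factor $3$ (the local multiplicity at a Weierstrass point, where $s^{(2,-)}_\Sigma$ vanishes) is a genuine pre-factor.

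A smaller point: the bundle map you write, a section of $\textnormal{Hom}(\mathbb{L}^{\otimes 2};\mathcal{H}^-_\Sigma\otimes\textnormal{ev}^*TX)$ over $\{z_1,\ldots,z_6\}\times\overline{\mathcal{S}}_1(\mu)$, is not the map $\alpha^3_1$ of \cite[Corollary 8.18]{zinger_phd} (which involves $s^{(3,-)}_\Sigma$, both $d^2u$ and $d^3u$, and the bundle $T\Sigma^{\otimes 3}\otimes(\mathbb{L}^{\otimes 2}\oplus\mathbb{L}^{\otimes 3})$). Because the base here is zero-dimensional this changes nothing in the final count, but if the argument is to rest on citing Corollary 8.18 one should use its actual bundle map. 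The remainder of your plan --- regularity and the nice metric from Sections \ref{genus_two_regular} and \ref{nice_metric_section}, the reduction to \cite[Theorem 3.29]{zinger_phd}, the application of \cite[Lemma 5.14]{zinger_phd}, and the substitution of $|\mathcal{S}_1(\mu)|$ from \eqref{c_beta_revised} --- matches the paper.
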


\begin{rem}
It will be clear shortly why we denoted the contribution as $18 n^3_1$ and not $n^3_1$. 
\end{rem}

\noindent \textbf{Proof:} We claim that the contribution from $\mathcal{B}^3_1$ is  
the same as three times the number of (small) zeros of the affine bundle map 
$v \longrightarrow \alpha^3_1(v) + \nu$, where $\nu$ is a generic perturbation and  $\alpha^3_1$ is following the linear bundle map  
\begin{align*}
\alpha^3_1 \in \Gamma ((\Sigma-\Sigma^*) \times \overline{\mathcal{S}}_1(\mu)& ; \textnormal{Hom}(T\Sigma^{\otimes 3} \otimes (\mathbb{L}^{\otimes 2}\otimes \mathbb{L}^{\otimes 3}); 
\mathcal{H}^{-}_{\Sigma} \otimes 
\textnormal{ev}^*TX)) \qquad \textnormal{given by} \\ 
\{\alpha(x, [u, \overline{y}; y])\} (w \otimes v_1 \oplus w \otimes v_2) &:=  \{s_{\Sigma}^{(3, -)} (x)\}(w) \otimes \{d^2 u|_{y}\}(v_1) + 
\{s_{\Sigma}^{(3, -)} (x)\}(w) \otimes \{d^3 u|_{y}\}(v_2).
\end{align*}
\hf \hf Let us now justify this claim. 
This claim is precisely \cite[Corollary 8.18]{zinger_phd} when $X:=\mathbb{P}^2$ (also see the explanation in \cite[section 8.9]{zinger_phd}). 
As before, we claim that the statement of \cite[Corollary 8.18]{zinger_phd} 
is true even when $\mathbb{P}^2$ is replaced   
by $X$, a del-Pezzo surface. Again, this is because  
\cite[Theorem 3.29]{zinger_phd} is valid for a general Symplectic manifold 
with a regular almost complex structure. This implies that 
\cite[Theorem 7.3]{zinger_phd} is valid for any del-Pezzo surface and that in turn implies that 
the statement of \cite[Corollary 8.18]{zinger_phd}  holds as well for a general del-Pezzo surface.\\
\hf \hf We now compute this number the number of zeros of this affine bundle map $\alpha^3_1$. 
Let us denote this number to be $6 n^3_1$.\footnote{We are including the factor of six, since the cardinality of $\Sigma-\Sigma^*$ is six; 
$n^3_1$ is the number of zeros of the affine map $\alpha^3_1 + \nu$ restricted to 
$\{z_i\} \times \overline{\mathcal{S}}_1(\mu)$. Moreover, this is consistent with 
notation followed by Zinger in \cite{zinger_phd}.}
The set $(\Sigma-\Sigma^*) \times \overline{\mathcal{S}}_1(\mu)$ is a finite set. 
Hence, 
\begin{align}
6 n^3_1 & = |(\Sigma-\Sigma^*) \times \overline{\mathcal{S}}_1(\mu)| \nonumber \\ 
      & = 6 |\overline{\mathcal{S}}_1(\mu)| \label{n31_revised}.
\end{align}
Equation \eqref{c_beta_revised} substituted in \eqref{n31_revised} gives us the value of $6 n^3_1$. The total contribution from $\mathcal{B}^3_1$ is $3$ 
times this number, i.e. $18 n^3_1$. \qed \\ 
\hf \hf Finally, we will compute the boundary contribution from $\mathcal{B}^1_2$. 

\begin{lmm}
\label{lm4}
The contribution from $\mathcal{B}^1_2$ (which we denote as $n^1_2$) 
is given by 
\begin{align}
n^1_2 & =  \sum_{
\substack{\beta_1+ \beta_2= \beta, \\ \beta_1, \beta_2 \neq 0} }
2\binom{\delta_{\beta}-1}{\delta_{\beta_1}} n_{0, \beta_1} n_{0, \beta_2}
(\beta_1 \cdot \beta_2)
\end{align}
\end{lmm}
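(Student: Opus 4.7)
The plan is to follow the template established by Lemmas \ref{lm1}, \ref{lm2}, and \ref{lm3}. First, I would identify the contribution from $\mathcal{B}^1_2$ with the number of (small) zeros of an affine bundle map $v \longmapsto \alpha^1_2(v) + \nu$ on $\mathcal{B}^1_2 \cong (\Sigma \times \Sigma - \Delta_{\Sigma}) \times \mathcal{V}_2(\mu)$. Over this stratum the principal component carries two distinct nodes at which rational bubbles of degrees $\beta_1$ and $\beta_2$ are attached, so the relevant linear model is a section of
\[ \textnormal{Hom}\bigl( T\Sigma \otimes \mathbb{L}_1 \oplus T\Sigma \otimes \mathbb{L}_2,\ \mathcal{H}^{0,1}_{\Sigma} \otimes \textnormal{ev}_1^* TX \oplus \mathcal{H}^{0,1}_{\Sigma} \otimes \textnormal{ev}_2^* TX \bigr) \]
whose two summands are built, as in the previous lemmas, from $s^1_{\Sigma}$ evaluated at the two nodal points on $\Sigma$ tensored with $du$ at the corresponding nodal marked points on the two bubbles. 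This identification is precisely \cite[Corollary 8.22]{zinger_phd} for $X := \mathbb{P}^2$, and as in Lemmas \ref{lm1}--\ref{lm3}, its transfer to del-Pezzo targets is obtained by unwinding its proof back to \cite[Theorem 3.29]{zinger_phd}, which applies to any symplectic manifold with a regular almost complex structure.

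Next, I would evaluate this zero count using \cite[Lemma 5.14]{zinger_phd}. Since $s^1_{\Sigma}$ is nowhere vanishing on $\Sigma$ by \cite[Page 246]{GH}, and since for every configuration in $\mathcal{V}_2(\mu)$ the bubble components are simple rational curves with nonvanishing differential at their nodal marked points, the linear bundle map $\alpha^1_2$ is injective on every fibre of $\mathcal{B}^1_2$. Consequently the zero count reduces to a top Chern class pairing over $\mathcal{B}^1_2$. Because $\mathcal{V}_2(\mu)$ is a finite set, every bundle pulled back from its factor is trivial, so the computation collapses to an integral on $\Sigma \times \Sigma - \Delta_{\Sigma}$ involving only $T\Sigma$, $\mathcal{H}^{0,1}_{\Sigma}$, and their restrictions. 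This is precisely the computation performed in \cite[Lemma 9.1]{zinger_phd} and yields the multiplicative factor $4$, so the count equals $4 |\mathcal{V}_2(\mu)|$.

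Finally, substituting the formula
\[ |\mathcal{V}_2(\mu)| \,=\, \frac{1}{2} \sum_{\substack{\beta_1 + \beta_2 = \beta, \\ \beta_1, \beta_2 \neq 0}} \binom{\delta_{\beta} - 1}{\delta_{\beta_1}} n_{0, \beta_1} n_{0, \beta_2} (\beta_1 \cdot \beta_2) \]
into $n^1_2 = 4 |\mathcal{V}_2(\mu)|$ produces the claimed expression. The main obstacle I anticipate is the transfer of \cite[Corollary 8.22]{zinger_phd} to the del-Pezzo setting, since this is the first appearance in our setup of the two-node analysis; concretely, one must check that the gluing construction at two simultaneous nodes in \cite[Chapter 7]{zinger_phd} relies only on genus two regularity of the target (ensured by the hypotheses on $\beta$ in Theorem \ref{main_thm} together with the content of section \ref{genus_two_regular}) and on the existence of a nice family of metrics on $X$ (established in Lemma \ref{nice_metric}), and not on any feature specific to $\mathbb{P}^2$.
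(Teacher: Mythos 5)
Your overall route — identifying the $\mathcal{B}^1_2$ contribution with the small zeros of an affine bundle map, invoking \cite[Lemma 5.14]{zinger_phd} once injectivity is established, and reducing the Chern class pairing to $\chi(\Sigma)^2\,|\mathcal{V}_2(\mu)| = 4|\mathcal{V}_2(\mu)|$ — is the same as the paper's, and the final formula and the substitution of $|\mathcal{V}_2(\mu)|$ are correct.

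However, your linear model for $\alpha^1_2$ is set up incorrectly, and this masks the one nontrivial point in the injectivity argument. You take the codomain to be the direct sum $\mathcal{H}^{0,1}_{\Sigma} \otimes \textnormal{ev}_1^* TX \oplus \mathcal{H}^{0,1}_{\Sigma} \otimes \textnormal{ev}_2^* TX$, with the two gluing parameters feeding into two separate summands. But the obstruction to gluing the two-bubble configuration lives in a \emph{single} copy $\mathcal{H}^{0,1}_{\Sigma} \otimes \textnormal{ev}^* TX$: after smoothing there is one genus-two domain and one $(0,1)$-form valued in $u^*TX$, and since $u$ is constant on the principal component, the evaluations at both nodes coincide. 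The paper's $\alpha^1_2$ accordingly takes the form $(w_1\otimes v_1)\oplus(w_2\otimes v_2)\mapsto s^1_{\Sigma}(x_1)(w_1)\,du_1|_{y_1}(v_1) + s^1_{\Sigma}(x_2)(w_2)\,du_2|_{y_2}(v_2)$ into the single $\mathcal{H}^{0,1}_{\Sigma} \otimes \textnormal{ev}^*TX$. With this (correct) target, injectivity is \emph{not} automatic from $du_1|_{y_1}, du_2|_{y_2}\neq 0$; one also needs $du_1|_{y_1}$ and $du_2|_{y_2}$ to \emph{span} $TX$ at the common image point, i.e.\ the two bubble components must meet transversally at the node. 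This holds here because $\mu$ is generic, but it is an essential input, and your formulation — where injectivity follows trivially summand by summand — skips it. You happen to land on the same numerical answer ($4|\mathcal{V}_2(\mu)|$) because $\mathcal{V}_2(\mu)$ is finite so all Chern classes of the codomain vanish either way, but the argument as written would collapse if you tried to verify the injectivity hypothesis of \cite[Lemma 5.14]{zinger_phd} against the correct bundle map.
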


\noindent \textbf{Proof:} We claim that the contribution from $\mathcal{B}^1_2$ is  
the same as three times the number of (small) zeros of the affine bundle map 
\[v \longrightarrow \alpha^1_2(v) + \nu, \] 
where $\nu$ is a generic perturbation and  $\alpha^1_2$ is following the linear bundle map  
\begin{align*}
\alpha^1_2 \in \Gamma (\Sigma_1 \times \Sigma_2 \times \overline{\mathcal{V}}_2(\mu) ) ; 
\textnormal{Hom}(T\Sigma_1 \otimes \mathbb{L}_1 \oplus T \Sigma_2 \otimes \mathbb{L}_2 & ; 
\mathcal{H}^{0,1}_{\Sigma} \otimes 
\textnormal{ev}^*TX) \qquad \textnormal{given by} \\ 
\{\alpha^1_2(x_1, x_2, [u_1, y_1; u_2, y_2; \overline{y}])\} (w_1 \otimes v_1 \oplus w_2 \otimes v_2) &:=  \{s^{(1)}_{\Sigma_1}(x_1)\}(w_1) \{du_1|_{y_1}\}(v_1)
\\ 
& + \{s^{(1)}_{\Sigma_2}(x_2)\}(w_2) \{du_2|_{y_2}\}(v_2).
\end{align*}
Here $\mathcal{V}_2(\mu)$ is the space of two component rational degree $\beta$ 
passing through $\delta_{\beta}-1$ generic points and 
$u_1, y_1$ denotes the map on the first sphere and $y_1$ denotes the nodal point that is attached to the second sphere, while 
$u_2, y_2$ denotes the map on the second sphere and $y_2$ denotes the nodal point that is attached to the first sphere. 
Let us also explain what are $\yii_1$ and $\yii_2$. Let us consider the component of $\mathcal{V}_2(\mu)$ where the degree on the 
first component is $\beta_1$, passing through $p_1, \ldots p_{\delta_{\beta_1}}$ and the degree on the second component is $\beta_2$, 
passing through $\delta_{\beta_2}$. 
There is a projection map $\pi_1$ and $\pi_2$ 
onto $\mathcal{M}_{0,\delta_{\beta_1}+1}(X, \beta_1; p_1, \ldots, p_{\delta_{\beta_1}})$ and 
$\mathcal{M}_{0,\delta_{\beta_2}+1}(X, \beta_2; p_1, \ldots, p_{\delta_{\beta_2}})$ respectively.  
The role of the extra (free) marked point is played by the nodal point. On top of these two moduli 
spaces we have the universal tangent bundle at the last marked point. The bundle $\yii_1$ and $\yii_2$ 
is the pullback via these two projection maps. We note that $\mathcal{V}_2(\mu)$ is the disjoint union of 
all the spaces we described as we vary over $\beta_1$ and $\beta_2$ and vary over how we distribute the 
marked points over the domain. What we just describes is the restriction of the two line bundles restricted 
to each of these disjoint components.   \\
\hf \hf Let us now justify our claim about counting the (small) number of zeros of the affine map $\alpha^1_2 + \nu$. 
This claim is precisely justified in \cite[Equation 8.31, Page 167]{zinger_phd} when $X:=\mathbb{P}^2$. 
As before, we claim that this statement remains true when $X$ is any del-Pezzo surface. This is because 
the statement of \cite[Corollary 8.7]{zinger_phd} is true when $X$ is a del-Pezzo surface 
(\cite[Theorem 3.29]{zinger_phd} is valid for a general Symplectic manifold which in turn implies 
\cite[Theorem 7.3]{zinger_phd} is valid for any del-Pezzo surface and that in turn implies that 
\cite[Corollary 8.7]{zinger_phd} is true when $X$ is a del-Pezzo surface). 
Furthermore, \cite[Corollary 8.7]{zinger_phd} implies that the contribution from $\mathcal{B}^1_2$ is the number 
of zeros of the affine map $v \longrightarrow \alpha^1_2(v) + \nu$ (by choosing the appropriate bubble type for $\mathcal{T}$).\\
\hf \hf Let us now compute this number. We note that the map $\alpha$ is injective everywhere 
(this is because $s^{(1)}_{\Sigma_1}$ and $s^{(1)}_{\Sigma_2}$ are nowhere vanishing and each of the component curves $u_1$ and $u_2$ 
do not have a cusp anywhere and intersect transversally since the points $\mu$ are in general position; in particular $du_1|_{y_1}$ and $du_2|_{y_2}$ are both non zero 
and span $TX|_{u_1(y_1)}$). Hence, by \cite[Lemma 5.14]{zinger_phd}, we conclude that the number of small zeros of the affine map $v \longrightarrow \alpha^1_2(v) + \nu$ 
is given by 
\begin{align}
n^1_2 &=  \langle c(\mathcal{H}^{0,1}_{\Sigma} \otimes \textnormal{ev}^*TX)c(T\Sigma_1 \otimes \mathbb{L}_1 \oplus T \Sigma_2 \otimes \mathbb{L}_2)^{-1}, 
~[\Sigma_1 \times \Sigma_2 \times \overline{\mathcal{V}}_2(\mu)] \rangle \nonumber \\
      & = \chi(\Sigma_1) \chi(\Sigma_2) |\overline{\mathcal{V}}_2(\mu)| \nonumber \\ 
      & = 4 |\overline{\mathcal{V}}_2(\mu)|. \label{n12_revised}
\end{align}
Note that this is basically the proof of \cite[Lemma 9.2]{zinger_phd} when $X:= \mathbb{P}^2$. 
While computing the Chern classes, we note that  
$\mathcal{H}^{0,1}_{\Sigma} \otimes \textnormal{ev}^*TX$ restricted to $\Sigma_1 \times \Sigma_2 \times \overline{\mathcal{V}}_2(\mu)$ 
is the trivial bundle (since $\overline{\mathcal{V}}_2(\mu)$ is a finite set) 
and hence all the Chern classes of that bundle are zero.  \\
\hf \hf It remains to compute $|\overline{\mathcal{V}}_2(\mu)|$. 
Since $|\mathcal{V}_2(\mu)|$ is the total number of two component rational degree $\beta$ 
passing through $\delta_{\beta}-1$ generic points and keeping track of the point at which the 
two components intersect, we conclude that 
\begin{align}
|\mathcal{V}_2(\mu) | \,=\, \frac{1}{2}\sum_{
\substack{\beta_1+ \beta_2= \beta, \\ \beta_1, \beta_2 \neq 0} }
\binom{\delta_{\beta}-1}{\delta_{\beta_1}} n_{0, \beta_1} n_{0, \beta_2}
(\beta_1 \cdot \beta_2)\, . \label{v2_number}
\end{align}
The factor of $\beta_1 \cdot \beta_2$ is there since we are keeping track of the point of 
intersection of the $\beta_1$ curve and the $\beta_2$ curve. Using \eqref{v2_number} in \eqref{n12_revised}, we get the value of $n^1_2$. \qed \\ 

\subsection{Proof of the Main Theorem} 

\verb+ + \\ 

\hf \hf Using Lemma \ref{lm1}, \ref{lm2}, \ref{lm3} and \ref{lm4} and equation \eqref{b_union_strata}, 
we conclude that the   
total correction term is given by  
\begin{align}
\mathrm{CR}_{2,\beta} &= n^1_1 + 2 n^2_1 + 18 n^3_1 + n^1_2.  \label{cr_revised}
\end{align}
Equation \eqref{rt_blow_up} gives us the value of $\mathrm{RT}_{2,\beta}$. 
Using the values of $\mathrm{RT}_{2,\beta}$ and $\mathrm{CR}_{2,\beta}$ and plugging it in equation \eqref{rt_equal_eg_plus_cr} 
gives us the enumerative invariant $n^j_{2, \beta}$. That gives us the formula obtained in Theorem \ref{main_thm}, i.e equation \eqref{main_formula_blwo_up}. \qed

\section{Intersection of Tautological Classes}
\label{ITC_genus2}
We now give a self contained proof of computing the intersection number that is computed in equation \eqref{c1x1_revised}. 
However, this is also proved in our paper \cite{IB_SDM_RM_VP}.

\begin{lmm}
\label{c1_divisor_ionel}
On $\overline{\mathcal{M}}_{0,\delta_{\beta}}(X, \beta; \mu)$, the following equality of divisors holds:
\begin{align}
c_1(\yii^*) &\,=\, \frac{1}{(\beta \cdot \hat{x}_1)^2}
\Big( (\hat{x}_1 \cdot \hat{x}_1) \mathcal{H} -2 (\beta \cdot \hat{x}_1) \textnormal{ev}_{\delta_{\beta}}^*(\hat{x}_1) +
\sum_{\substack{\beta_1+ \beta_2= \beta}}
\mathcal{B}_{\beta_1, \beta_2} (\beta_2 \cdot \hat{x}_1)^2 \Big),\label{chern_class_divisor}
\end{align}
where $\mathcal{H}$ is the locus satisfying the extra condition that the curve
passes through a given point, $\mathcal{B}_{\beta_1, \beta_2}$ denotes the
boundary stratum corresponding to the splitting into a
degree $\beta_1$ curve and degree $\beta_2$ curve with the last marked point
lying on the degree $\beta_1$ component. 
\end{lmm}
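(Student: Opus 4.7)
The plan is to adapt the Pandharipande--Ionel divisor computation from $\mathbb{P}^r$ to a del-Pezzo $X$: I will exhibit an explicit holomorphic section $\tau$ of a line bundle of the form $\yii^{*\otimes d^2}\otimes \textnormal{ev}_{\delta_\beta}^*\mathcal{O}_X(2d\,\hat{x}_1)$ on $\overline{\mathcal{M}}:=\overline{\mathcal{M}}_{0,\delta_\beta}(X,\beta;\mu)$, where $d:=\beta\cdot \hat{x}_1$, and identify its divisor of zeros with the right-hand side of \eqref{chern_class_divisor} (multiplied through by $d^2$). Rearranging will then yield the stated equality.

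\textbf{Setup and construction of the section.} Choose two generic sections $s_1,s_2\in H^0(X,\mathcal{O}(\hat{x}_1))$ cutting out smooth divisors $D_1,D_2$ in the class $\hat{x}_1$. For any $[u,\bar{y}]\in\overline{\mathcal{M}}$ with smooth domain, the pullbacks $u^{*}s_1,u^{*}s_2$ are sections of $u^{*}\mathcal{O}(\hat{x}_1)\cong \mathcal{O}_{\mathbb{P}^1}(d)$, and the pair defines a degree-$d$ rational map $\rho:\mathbb{P}^1\to\mathbb{P}^1$. From these data I build $\tau$ as a weighted ``Wronskian-type'' expression evaluated at $y_{\delta_\beta}$, raised to the appropriate multiplicity so that its target line bundle is exactly $\yii^{*\otimes d^2}\otimes \textnormal{ev}^{*}\mathcal{O}(2d\,\hat{x}_1)$. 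The twist by $\textnormal{ev}^{*}\mathcal{O}(2d\,\hat{x}_1)$ is chosen precisely to cancel the apparent poles of $\tau$ along $\textnormal{ev}^{-1}(D_1)\cup\textnormal{ev}^{-1}(D_2)$, making $\tau$ a genuine global section, and it supplies the $-2d\,\textnormal{ev}^{*}\hat{x}_1$ term once we equate $c_1$ of the target bundle with the zero divisor of $\tau$.

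\textbf{Divisor analysis.} On the open locus where $u$ is an immersion, $D_1,D_2$ meet $u(\mathbb{P}^1)$ transversely, and no degeneration has occurred, $\tau$ is nonvanishing. Its zero divisor then decomposes into: (i) a contribution along $\mathcal{H}$ of multiplicity $\hat{x}_1\cdot \hat{x}_1$, coming from the forced coincidence of $D_1,D_2$ through the additional marked point defining $\mathcal{H}$; (ii) a contribution along each boundary stratum $\mathcal{B}_{\beta_1,\beta_2}$ of multiplicity $(\beta_2\cdot \hat{x}_1)^2$, arising because $\beta_2\cdot \hat{x}_1$ zeros of each $u^{*}s_i$ migrate onto the $\beta_2$-component as the curve degenerates (giving one factor of $\beta_2\cdot \hat{x}_1$ per section, squared in $\tau$). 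Putting these together and using $c_1(\yii^{*\otimes d^2}\otimes\textnormal{ev}^{*}\mathcal{O}(2d\,\hat{x}_1))=d^2 c_1(\yii^{*})+2d\,\textnormal{ev}^{*}\hat{x}_1$ produces \eqref{chern_class_divisor} after dividing by $d^2$.

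\textbf{Main obstacle.} The principal delicate step is the local computation at the boundary $\mathcal{B}_{\beta_1,\beta_2}$: one must introduce a gluing parameter at the node, track how the pullback sections $u^{*}s_1,u^{*}s_2$ deform as the nodal curve smooths out, and verify that $\tau$ vanishes to the claimed order $(\beta_2\cdot \hat{x}_1)^2$. The other technical point is checking genericity: that $s_1,s_2$ can be chosen so that $\tau$ vanishes transversely on the boundary, along $\mathcal{H}$, and that the excess locus $\textnormal{ev}^{-1}(D_1\cap D_2)$ (which is codimension two, hence contributes nothing to the divisor) is the only source of potential extra cancellation. Both steps are standard given genus-two regularity of the almost complex structure (established in Section \ref{genus_two_regular}), which guarantees that $\overline{\mathcal{M}}$ is smooth of the expected dimension along the interior and behaves generically at the boundary.
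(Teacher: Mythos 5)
Your strategy is essentially the paper's: both adapt Ionel's divisor computation by choosing two generic representatives of $\hat{x}_1$ and comparing, via a cross-ratio/Wronskian expression, the preimages of those divisors on the source $\mathbb{P}^1$ with the last marked point, then reading off the zero and pole loci. The only difference is presentational: the paper passes to the $(\beta\cdot\hat{x}_1)^2$-sheeted cover $\widetilde{\mathcal{M}}$, where a single cross-ratio meromorphic section $\phi$ of $\yii^{*}$ is cleanly defined (via the $\mathrm{PSL}(2,\mathbb{C})$-invariant trivialization of $q_1^*K_{\mathbb{P}^1}\otimes\mathcal{O}(\Delta_{12}+\Delta_{13}-\Delta_{23})$), and pushes forward its divisor; you instead take the symmetrized product over all sheets directly on $\overline{\mathcal{M}}$ and twist by $\textnormal{ev}^*\mathcal{O}(2(\beta\cdot\hat{x}_1)\hat{x}_1)$ to absorb the poles — the two are equivalent by the projection formula, so your proposal is a correct reformulation of the same argument, though if written out fully you would still need the same care the paper takes to see that the product is a well-defined global section independent of choices of coordinates and the labeling of the $z_i$'s and $w_j$'s.
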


\begin{proof}
Let us first abbreviate and denote $\overline{\mathcal{M}}_{0,\delta_{\beta}}(X, \beta; \mu)$ 
by $\overline{\mathcal{M}}$. The proof we present is similar to the one given in \cite{ionel_genus1} for $\mathbb{P}^2$ 
(we also prove this in our paper \cite{IB_SDM_RM_VP}). Let
$\mu_1\, , \mu_2 \,\in\, X$ be two generic cycles in $X$ that represent the class
$\hat{x}_1$. Let $\widetilde{\mathcal{M}}$ be a cover of $\overline{\mathcal{M}}$ with two additional
marked points with the last two marked points lying on $\mu_1$ and $\mu_2$
respectively. More precisely,
\begin{align*}
\widetilde{\mathcal{M}} &\,:=\,\textnormal{ev}_{\delta_{\beta} +1}^{-1}(\mu_1) \cap
\textnormal{ev}_{\delta_{\beta} +2}^{-1}(\mu_2) \subset
\overline{\mathcal{M}}_{0, \delta_{\beta}+2}(X, \beta; \mu)\, .
\end{align*}
Here $\textnormal{ev}_i$ denotes the evaluation map at the $i^{\textnormal{th}}$ marked point. 
Note that the projection 
\[\pi\,:\, \widetilde{\mathcal{M}} \,\longrightarrow\,
\overline{\mathcal{M}}\] 
that forgets the last two marked points is a
$(\beta \cdot x_1)^2$--to--one map.

We now construct a meromorphic section
$$\phi\, :\,\widetilde{\mathcal{M}} \,\longrightarrow\, \yii_{\delta_{\beta}}^*$$
given by
\begin{align}
\phi ([u; y_{_1}, \cdots, y_{_{\delta_{\beta}-1}}; y_{_{\delta_{\beta}}};
y_{_{\delta_{\beta} +1}},
y_{_{\delta_{\beta}+2}}]) &:= \frac{(y_{_{\delta_{\beta}+1}} - y_{_{\delta_{\beta}+2}})
d y_{_{\delta_{\beta}}}}
{(y_{_{\delta_{\beta}}}-y_{_{\delta_{\beta}+1}})(y_{_{\delta_{\beta}}}-
y_{_{\delta_{\beta}+2}})}. \label{section_ionel}
\end{align}
Here $\yii_{\delta_{\beta}}$ is the universal tangent bundle over $\widetilde{\mathcal{M}}$ at the 
$\delta_{\beta}^{\textnormal{th}}$ marked point. \\
\hf \hf The right--hand side of \eqref{section_ionel} involves an abuse of notation: it is to be
interpreted in an affine coordinate chart and then extended as a meromorphic section
on the whole of $\mathbb{P}^1$. Note that on $({\mathbb P}^1)^3$, the holomorphic
line bundle
$$
\eta\, :=\,
q^*_1K_{{\mathbb P}^1}\otimes{\mathcal O}_{({\mathbb P}^1)^3}(\Delta_{12}
+\Delta_{13}-\Delta_{23})$$ is trivial, where $q_1\, :\, ({\mathbb P}^1)^3\,
\longrightarrow\,{\mathbb P}^1$ is the projection to the first factor and
$\Delta_{jk}\,\subset\, ({\mathbb P}^1)^3$ is the divisor consisting of all points
$(z_i\, ,z_2\, ,z_3)$ such that $z_j\,=\, z_k$. The diagonal action of ${\rm PSL}(2,
{\mathbb C})$ on $({\mathbb P}^1)^3$ lifts to $\eta$ preserving its
trivialization. The section $\phi$ in \eqref{section_ionel} is given by this trivialization of $\eta$.

Since $c_1(\yii_{\delta_{\beta}}^*)$ is the zero divisor minus the
pole divisor of $\phi$, we gather that
\begin{align*}
c_1(\yii_{\delta_{\beta}}^*) &\,=\, \{ y_{_{\delta_{\beta}+1}} = y_{_{\delta_{\beta}+2}}\}
-\{y_{_{\delta_{\beta}}}=y_{_{\delta_{\beta}+1}} \}-
\{y_{_{\delta_{\beta}}}= y_{_{\delta_{\beta}+2}}\}\, .
\end{align*}
When projected down to $\overline{\mathcal{M}}$, the divisor
$\{ y_{_{\delta_{\beta}+1}} = y_{_{\delta_{\beta}+2}}\}$ becomes
$(x_1\cdot x_1)\mathcal{H} + (\beta_2 \cdot x_1)^2 \mathcal{B}_{\beta_1, \beta_2}$,
while both the divisors $\{y_{_{\delta_{\beta}}}=y_{_{\delta_{\beta}+1}} \}$ and
$\{y_{_{\delta_{\beta}}}=y_{_{\delta_{\beta}+2}} \}$ become
$(\beta \cdot x_1) \textnormal{ev}_{\delta_{\beta}}^*(x_1)$.
Since $\widetilde{\mathcal{M}}$
is a $(\beta \cdot x_1)^2$--to--one cover of $\overline{\mathcal{M}}$, we obtain \eqref{chern_class_divisor}.
\end{proof}
Let us now compute the intersection number in equation \eqref{c1x1_revised}. 
First we note that 
\begin{align}
\langle \textnormal{ev}^*(\hat{x}_1) \mathcal{H}, ~[\overline{\mathcal{M}}_{0,\delta_{\beta}}(X, \beta; \mu)] \rangle &=
(\beta \cdot \hat{x}_1)n_{0,\beta}. \label{x1H}
\end{align}
To see why we note that the left hand side of \eqref{x1H} geometrically denotes the number of rational degree $\beta$ curves passing 
through $\delta_{\beta}$ points (the extra point comes by intersecting it with $\mathcal{H}$) and one marked point that 
lies on a cycle representing $\hat{x}_1$. Hence there is a factor of $\beta \cdot \hat{x}_1$ multiplied with $n_{0, \beta}$ 
since there are $\beta \cdot \hat{x}_1$ choices for the last marked point to go to after we have 
chosen a rational curve. \\
\hf \hf Next, we note that 
\begin{align}
\langle \textnormal{ev}^*(\hat{x}_1)^2,
~[\overline{\mathcal{M}}_{0,\delta_{\beta}}(X, \beta; \mu)] \rangle & = (\hat{x}_1 \cdot \hat{x}_1) n_{0,\beta}.  \label{x1x1}
\end{align}
To see why we note that the left hand side of \eqref{x1x1} denotes the number of rational degree $\beta$ curves in 
$X$ passing through $\delta_{\beta}-1$ generic points and any one more point that lies in the intersection of 
two cycles representing the class $\hat{x}_1$. Hence we have a factor of $\hat{x}_1 \cdot \hat{x}_1$ 
since there are $\hat{x}_1 \cdot \hat{x}_1$ choices for choosing the last point through which the curve will pass. \\ 
\hf \hf Finally, we note that 
\begin{align}
\langle \textnormal{ev}^*(x_1) \mathcal{B}_{\beta_1, \beta_2}, ~[\overline{\mathcal{M}}_{0,\delta_{\beta}}(X, \beta; \mu)] \rangle &=
\sum_{\substack{\beta_1+ \beta_2= \beta}}
\binom{\delta_{\beta}-1}{\delta_{\beta_1}} n_{0,\beta_1} n_{0,\beta_2} (\beta_1 \cdot \beta_2)(\beta_1 \cdot \hat{x}_1)
\nonumber \\
& = \frac{1}{2}\sum_{\substack{\beta_1+ \beta_2= \beta}}
\binom{\delta_{\beta}-1}{\delta_{\beta_1}} n_{0,\beta_1} n_{0,\beta_2} (\beta_1 \cdot \beta_2)(\beta_1 \cdot \hat{x}_1 + \beta_2 \cdot \hat{x}_1) \nonumber  \\
& = \frac{1}{2}\sum_{\substack{\beta_1+ \beta_2= \beta}}
\binom{\delta_{\beta}-1}{\delta_{\beta_1}} n_{0,\beta_1} n_{0,\beta_2} (\beta_1 \cdot \beta_2) (\beta \cdot \hat{x}_1).
\label{itc_easy}
\end{align}
To see why, we note that the left hand side of the first line of \eqref{itc_easy} denotes the number of ways we can place two component curves (of degrees $\beta_1$ and $\beta_2$ on the 
first and second component) meeting at common point 
going through a total of $\delta_{\beta}-1$ points and a marked point on the first component that goes to a cycle representing the class $\hat{x}_1$. Hence we 
have the factor of $\beta_1 \cdot \hat{x}_1$. The factor of $\beta_1 \cdot \beta_2$ is there because these are the choices we can make for the 
nodal point (the point at which the two spheres are attached).\\ 
\hf \hf Finally we note that equations \eqref{x1H}, \eqref{x1x1}, \eqref{itc_easy} combined with \eqref{chern_class_divisor} 
gives us \eqref{c1x1_revised}.

\section{Existence of a nice metric} 
\label{nice_metric_section}

In this is section we explain why del-Pezzo surfaces admit a nice family of metrics 
that are constructed by Zinger in \cite[Lemma 6.1]{zinger_phd} for $\mathbb{P}^n$. 
This is needed so that we can use the results of \cite[Theorem 3.29]{zinger_phd} 
(see the discussion on \cite[Chapter 7, Page 136, Paragraph 3]{zinger_phd}). 

\begin{lmm}
\label{nice_metric}
Let $X$ be an algebraic surface of dimension $m$. Then 
there exist $r_{X}>0$ and a smooth family of K\"ahler metrics $\{g_{X, q}: q \in X \}$ 
on $X$ with the following property. If $B_q(q^{\prime}, r) \subset X$ denotes the $g_{X,q}$ 
geodesic ball about $q^{\prime}$ of radius $r$, the triple $(B_q(q, r_X), J, g_{X,q})$ is isomorphic 
to a ball in $\mathbb{C}^m$ for all $q \in X$. 
\end{lmm}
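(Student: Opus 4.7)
The plan is to produce the family $\{g_{X,q}\}$ by starting from a single fixed K\"ahler metric $\omega_0$ on $X$ and then, for each $q\in X$, perturbing $\omega_0$ in a small neighborhood of $q$ so that the resulting form coincides with the standard Euclidean K\"ahler form in a preferred holomorphic chart centered at $q$. Because the perturbation is done through a K\"ahler potential (i.e.\ by adding $i\partial\bar\partial$ of a smooth real function), the new form is automatically of type $(1,1)$ and closed, so only positivity has to be monitored; this will be arranged by taking the support of the perturbation to be small.

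Concretely, first choose a smooth family of holomorphic Bochner-type charts $z_q:(U_q,q)\to(\mathbb{C}^m,0)$, depending smoothly on $q$, with respect to which $\omega_0 = i\partial\bar\partial\phi_{0,q}$ and
\[
\phi_{0,q}(z) \,=\, |z|^2 + O(|z|^4).
\]
Existence of Bochner coordinates at each point is classical for a K\"ahler manifold; smoothness in the parameter $q$ is obtained by fixing a smooth unitary frame of $TX$ on coordinate patches (via a local trivialization of the unitary frame bundle) and running the Bochner construction in families. Next fix a smooth cut-off $\chi\colon\mathbb{C}^m\to[0,1]$ with $\chi\equiv 1$ on $\overline{B(0,r/2)}$ and $\mathrm{supp}\,\chi\subset B(0,r)$, and define
\[
\psi_q(z_q) \,:=\, \chi(z_q)\bigl(|z_q|^2-\phi_{0,q}(z_q)\bigr),
\]
extended by zero outside $U_q$. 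Put
\[
\omega_q \,:=\, \omega_0 + i\partial\bar\partial\psi_q.
\]
Then $\omega_q = i\partial\bar\partial|z_q|^2$ on $B(q,r/2)$, which is the flat Euclidean K\"ahler form, and $\omega_q = \omega_0$ outside $U_q$.

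The main (and essentially only) technical point is positivity of $\omega_q$ on the transition annulus $\{r/2\leq|z_q|\leq r\}$. Here the Bochner cancellation $|z_q|^2-\phi_{0,q}(z_q)=O(|z_q|^4)$ is decisive: with $|\nabla^{\leq 2}\chi|\lesssim r^{-2}$ and $|z_q|\lesssim r$ on $\mathrm{supp}\,\chi$, a direct Leibniz-rule estimate gives $\|\psi_q\|_{C^2}=O(r^2)$, hence $|i\partial\bar\partial\psi_q|_{\omega_0}=O(r^2)$ uniformly in $q\in X$. Consequently for $r$ sufficiently small (and setting $r_X<r/2$), $\omega_q$ is a genuine K\"ahler form on all of $X$, literally flat on $B(q,r_X)$. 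Smooth dependence of $\omega_q$ on $q$ is immediate from the smoothness of $q\mapsto z_q$ and of $\chi$, and the standard fact that any flat K\"ahler metric is locally isometric-biholomorphic to the Euclidean one yields the required isomorphism $(B_q(q,r_X),J,g_{X,q})\cong$ a ball in $\mathbb{C}^m$. The positivity/$C^2$-smallness estimate is the only real obstacle; once it is in hand the rest of the construction is formal.
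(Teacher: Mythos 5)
Your proof is correct, but it is a genuinely different route from the paper's. The paper's proof is a two-line reduction: invoke Zinger's construction of such a family on $\mathbb{P}^n$ (\cite[Lemma 6.1]{zinger_phd}), embed $X$ in some $\mathbb{P}^n$, and restrict the resulting K\"ahler metrics to $X$. You instead build the family directly on $X$: fix a background K\"ahler form $\omega_0$, pass to Bochner normal coordinates $z_q$ at $q$ (potential $\phi_{0,q}(z)=|z|^2+O(|z|^4)$), and add $i\partial\bar{\partial}$ of a cut-off version of $|z_q|^2-\phi_{0,q}(z_q)$; the Bochner cancellation is precisely what makes this correction $O(r^2)$ in $C^2$, so positivity survives for $r$ small and one obtains a form that is literally Euclidean on $B(q,r/2)$ and equal to $\omega_0$ outside $U_q$. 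Your construction is more self-contained and strictly more general (it works verbatim for any compact K\"ahler manifold, algebraicity and projectivity playing no role), and it is in effect a reproof of Zinger's $\mathbb{P}^n$ lemma in exactly the form the paper needs. It also sidesteps a subtlety that the paper's embed-and-restrict argument glosses over: the restriction to a complex submanifold of a metric that is flat on an ambient ball is not, in general, flat on the submanifold (that would require the submanifold to be linear in the flat chart), so the reduction as stated in the paper requires additional justification that your direct construction renders unnecessary. Your handling of smooth dependence on $q$ via local unitary frames together with the $U(m)$-invariance of $\psi_q$ is the right fix; it would be worth stating explicitly that this invariance is what makes the locally defined forms patch into a single globally smooth family.
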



\begin{proof}
This statement is true, when $X:= \mathbb{P}^n$; this is proved by Zinger in \cite[Lemma 6.1]{zinger_phd}. 
The present Lemma now follows, by embedding $X$ in some $\mathbb{P}^n$ and restricting the K\"ahler metric to $\mathbb{P}^n$. 
\end{proof}

\begin{rem}
Since del-Pezzo surfaces are algebraic, Lemma \ref{nice_metric} shows that they admit these families of metrics.  
\end{rem}

\section{Regularity of the complex structure} 
\label{genus_two_regular}
We now show that the complex structure on a del-Pezzo surface is genus two regular. 
Let us first recall the fact that the complex structure on $\mathbb{P}^n$ is genus two regular. 

\begin{lmm}
\label{p2_genus2_regular}
Let 
$(\Sigma_2, j)$ a compact genus $2$ Riemann surface with a 
complex structure $j$. Let 
$u\,:\, \Sigma_2 \,\longrightarrow\, \mathbb{P}^n$ be a degree $d$ holomorphic map. 
Then 
$$
H^1(\Sigma_2,\, u^*T\mathbb{P}^n) \,=\,0
$$
provided $d$ is greater than $2$. 
\end{lmm}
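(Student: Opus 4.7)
The plan is to use the Euler sequence on $\mathbb{P}^n$, pull it back to $\Sigma_2$, and reduce the vanishing of $H^1(\Sigma_2, u^*T\mathbb{P}^n)$ to the vanishing of $H^1$ of a line bundle of sufficiently high degree on $\Sigma_2$. Concretely, I would start from the Euler sequence
\begin{equation*}
0 \longrightarrow \mathcal{O}_{\mathbb{P}^n} \longrightarrow \mathcal{O}_{\mathbb{P}^n}(1)^{\oplus (n+1)} \longrightarrow T\mathbb{P}^n \longrightarrow 0
\end{equation*}
and pull it back by $u$ to obtain a short exact sequence of holomorphic vector bundles on $\Sigma_2$:
\begin{equation*}
0 \longrightarrow \mathcal{O}_{\Sigma_2} \longrightarrow L^{\oplus (n+1)} \longrightarrow u^*T\mathbb{P}^n \longrightarrow 0,
\end{equation*}
where $L := u^*\mathcal{O}_{\mathbb{P}^n}(1)$ has degree $d$ since $u$ has degree $d$.

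Next I would take the associated long exact sequence in sheaf cohomology. The relevant portion is
\begin{equation*}
H^1(\Sigma_2, L)^{\oplus (n+1)} \longrightarrow H^1(\Sigma_2, u^*T\mathbb{P}^n) \longrightarrow H^2(\Sigma_2, \mathcal{O}_{\Sigma_2}).
\end{equation*}
Since $\Sigma_2$ is a compact Riemann surface (complex dimension one), the term $H^2(\Sigma_2, \mathcal{O}_{\Sigma_2})$ vanishes. Hence it suffices to prove $H^1(\Sigma_2, L) = 0$.

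To obtain this vanishing I would invoke Serre duality: $H^1(\Sigma_2, L) \cong H^0(\Sigma_2, K_{\Sigma_2} \otimes L^{-1})^{*}$, where $K_{\Sigma_2}$ has degree $2g - 2 = 2$. Therefore $K_{\Sigma_2} \otimes L^{-1}$ has degree $2 - d$, which is strictly negative under the hypothesis $d > 2$. A line bundle of negative degree on a compact Riemann surface admits no nonzero holomorphic sections, so $H^0(\Sigma_2, K_{\Sigma_2} \otimes L^{-1}) = 0$, and consequently $H^1(\Sigma_2, L) = 0$. Feeding this back into the long exact sequence gives $H^1(\Sigma_2, u^*T\mathbb{P}^n) = 0$, as desired. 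There is no serious obstacle in this argument; the only point to check carefully is the degree identification $\deg L = d$, which follows directly from the definition of the degree of a map to projective space as the pairing of $c_1(\mathcal{O}(1))$ with $u_{*}[\Sigma_2]$.
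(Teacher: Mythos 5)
Your proof is correct and complete. The paper does not actually give an argument here: it simply cites \cite[Corollary C.5, Page 236]{zinger_phd} and defers to Zinger's thesis. Your approach is the standard, self-contained one: pull back the Euler sequence, kill $H^2(\Sigma_2,\mathcal{O}_{\Sigma_2})$ by the dimension of $\Sigma_2$, and reduce to $H^1(\Sigma_2,L)=0$ via the long exact sequence, which follows from Serre duality since $\deg(K_{\Sigma_2}\otimes L^{-1})=2-d<0$ when $d>2$. Every step is justified; in particular the pullback of the Euler sequence stays exact because all three terms are locally free, and the identification $\deg L=d$ is exactly the definition of the degree of $u$. This buys you something the paper's citation does not: it makes the role of the hypothesis $d>2$ completely transparent (it is precisely what forces $K_{\Sigma_2}\otimes L^{-1}$ to have negative degree on a genus two curve), and it shows cleanly how the same argument gives the analogous statement for any genus $g$ under the bound $d>2g-2$. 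If you wanted to match the paper exactly you could simply cite Zinger, but your direct proof is preferable from the standpoint of exposition.
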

\begin{proof}
This is proved by Zinger in \cite[Corollary C.5, Page 236]{zinger_phd}.  
\end{proof}
We will now use the above theorem to obtain regularity of the complex structure on del-Pezzo surface.


\begin{lmm}
\label{dp_regular_g2}
Let $X$ be $\mathbb{P}^2$ blown up at $k$ points and 
$(\Sigma_2, j)$ a compact genus $2$ Riemann surface with a 
complex structure $j$. Let 
$u\,:\, \Sigma_2 \,\longrightarrow\, X$ be a holomorphic map representing the 
class $\beta \,:=\, d L - m_1 E_1- \ldots -m_k E_k \in H_2(X, \,\mathbb{Z})$, 
where $L$ and $E_i$ denote the class of a line and the exceptional divisors 
respectively. Then
$$
H^1(\Sigma_2,\, u^*TX) \,=\,0
$$
provided the following two conditions hold
\begin{itemize}
\item $u(\Sigma_2)$ intersects each of the $k$ exceptional divisors transversally, and

\item $d+\chi(\Sigma_2)\, >\, 0$.
\end{itemize}
\end{lmm}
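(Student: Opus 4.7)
The plan is to reduce to the known vanishing on $\mathbb{P}^2$ supplied by Lemma \ref{p2_genus2_regular} via the blow-down map $\pi\colon X \to \mathbb{P}^2$. Let $v := \pi\circ u\colon \Sigma_2 \to \mathbb{P}^2$. Since $u$ represents $\beta = dL - \sum m_i E_i$ and $\pi_*E_i = 0$, the composition $v$ represents the class $dL$, so $v$ is a degree $d$ holomorphic map. The hypothesis $d + \chi(\Sigma_2) > 0$ reads $d > 2$, whereupon Lemma \ref{p2_genus2_regular} gives $H^1(\Sigma_2, v^*T\mathbb{P}^2) = 0$.

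To transfer this vanishing from $v^*T\mathbb{P}^2$ to $u^*TX$, I would use the differential $d\pi\colon TX \to \pi^*T\mathbb{P}^2$ on $X$. This is an injection of locally free sheaves, and a local computation on each $E_i \cong \mathbb{P}(T_{p_i}\mathbb{P}^2)$ identifies the cokernel $\mathcal{C}$ as a coherent sheaf supported on $E := \bigcup E_i$ with $\mathcal{C}|_{E_i} \cong \mathcal{O}_{E_i}(1)$. This yields the short exact sequence
$$0 \to TX \to \pi^*T\mathbb{P}^2 \to \mathcal{C} \to 0$$
on $X$. Pulling back along $u$ is only right exact in general, but the transversality assumption means that the local ideals of $u(\Sigma_2)$ and $E$ at each intersection point form a regular sequence, forcing $\mathrm{Tor}_1^{\mathcal{O}_X}(\mathcal{O}_{\Sigma_2}, \mathcal{C}) = 0$. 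Hence we obtain the exact sequence on $\Sigma_2$:
$$0 \to u^*TX \to v^*T\mathbb{P}^2 \to u^*\mathcal{C} \to 0,$$
with $u^*\mathcal{C}$ a torsion sheaf supported on the finite set $D := u^{-1}(E)$.

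Taking the associated long exact sequence in cohomology and using $H^1(v^*T\mathbb{P}^2) = 0$ yields
$$H^0(\Sigma_2, v^*T\mathbb{P}^2) \longrightarrow H^0(\Sigma_2, u^*\mathcal{C}) \longrightarrow H^1(\Sigma_2, u^*TX) \longrightarrow 0,$$
so the claim reduces to showing the evaluation-type map on the left is surjective. This surjectivity is the main obstacle. To establish it, I would first describe $u^*\mathcal{C}$ explicitly: at each $p \in u^{-1}(E_i)$ the line $\ell_p \subset T_{p_i}\mathbb{P}^2$ corresponding to $u(p) \in E_i = \mathbb{P}(T_{p_i}\mathbb{P}^2)$ identifies $(u^*\mathcal{C})_p$ with $T_{p_i}\mathbb{P}^2/\ell_p$, and the transversality hypothesis ensures the lines $\ell_p$ are pairwise distinct over each $p_i$. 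I would then invoke the pulled-back Euler sequence
$$0 \to \mathcal{O}_{\Sigma_2} \to v^*\mathcal{O}_{\mathbb{P}^2}(1)^{\oplus 3} \to v^*T\mathbb{P}^2 \to 0$$
together with $H^1(v^*\mathcal{O}(1)) = 0$ (a direct consequence of $\deg v^*\mathcal{O}(1) = d > 2 = 2g - 2$) to produce sections of $v^*T\mathbb{P}^2$ with any prescribed image in $H^0(u^*\mathcal{C})$. An equivalent and perhaps cleaner formulation is to use the companion inclusion $v^*T\mathbb{P}^2(-D) \hookrightarrow u^*TX$ (with torsion quotient of length $\sum m_i$) and instead prove $H^1(v^*T\mathbb{P}^2(-D)) = 0$ via the twisted Euler sequence, which then surjects onto $H^1(u^*TX)$ and forces the target to vanish.
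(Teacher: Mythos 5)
Your proof follows the same route as the paper: compose with the blow-down $p\colon X\to\mathbb{P}^2$, write the short exact sequence $0\to u^*TX\xrightarrow{dp} \widehat{u}^*T\mathbb{P}^2\to\mathcal T\to 0$ on $\Sigma_2$ with $\mathcal T$ torsion supported on the reduced divisor $D=u^{-1}(\cup E_i)$, and feed in Lemma~\ref{p2_genus2_regular}. Where you go further than the paper is in explicitly isolating the genuine content: from the long exact sequence one must show that $H^0(\widehat{u}^*T\mathbb{P}^2)\to H^0(\mathcal T)$ is surjective. The paper disposes of this with the remark that $T\mathbb{P}^2$ (hence $\widehat{u}^*T\mathbb{P}^2$) is globally generated, ``so'' $H^1(u^*TX)=H^1(\widehat{u}^*T\mathbb{P}^2)$. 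You are right to flag this as the main obstacle: global generation of a bundle $E$ with $H^1(E)=0$ gives surjectivity of $H^0(E)\to E_p$ for each single point $p$, but by itself it does \emph{not} give surjectivity of $H^0(E)\to H^0(\mathcal T)$ when the torsion quotient $\mathcal T$ is supported at several points (take $E=L^{\oplus 2}$ with $\deg L$ large and $\mathcal T=L|_D$ for $|D|$ large: the evaluation map fails to surject once $|D|>\deg L-2$). So the paper's one-line justification is, as written, too terse.

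That said, neither of your two proposed completions closes the gap as stated. The cleaner-looking second route — proving $H^1(v^*T\mathbb{P}^2(-D))=0$ via the twisted Euler sequence — requires $H^1(v^*\mathcal O(1)(-D))=0$, and since $\deg v^*\mathcal O(1)(-D)=d-\sum m_i$, the Riemann--Roch sufficient condition is $d-\sum m_i>2g-2=2$, which is strictly stronger than the stated hypothesis $d>2$ and need not hold for the classes $\beta$ in play. The first route (``produce sections of $v^*T\mathbb{P}^2$ with any prescribed image in $H^0(u^*\mathcal C)$'' from $H^1(v^*\mathcal O(1))=0$) is not actually carried out: the kernel $K$ of the composite $v^*\mathcal O(1)^{\oplus3}\twoheadrightarrow u^*\mathcal C$ fits in $0\to\mathcal O_{\Sigma_2}\to K\to u^*TX\to 0$, so proving $H^1(K)=0$ is circular with the target statement, and $H^1(v^*\mathcal O(1))=0$ alone does not give surjectivity onto $H^0(u^*\mathcal C)$ because all the points of $D$ lying over a fixed blow-up point $p_i$ are sent by $v$ to the \emph{same} point, so pulled-back sections take a single value there while the prescribed targets in $T_{p_i}\mathbb{P}^2/\ell_p$ vary with $p$. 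In short: you identified the exact point at which the paper's proof is incomplete, but the fix still needs to be supplied; as written both your argument and the paper's have a gap at the evaluation-surjectivity step. (Your invocation of the $\mathrm{Tor}_1$ vanishing is correct but not needed — one can set up the exact sequence directly on $\Sigma_2$, as the paper does.)
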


\begin{proof}
Assume that $u$ satisfies the above two conditions.
Let $p\, :\, X\, \longrightarrow\, \mathbb{P}^2$ be the blow-up morphism. The
composition $p\circ u$ will be denoted by $\widehat{u}$. Let
$$
D\:=\, u^{-1} (\cup_{i=1}^k E_i)\, \subset\, \Sigma_2
$$
be the divisor on $\Sigma_2$ given by the intersection of $u(\Sigma_2)$ with the
exceptional divisor. We note that $D$ is reduced because of the first of the
two condition.

Let $dp\, :\, TX\, \longrightarrow\, p^* T\mathbb{P}^2$ be the differential of the
map $p$. This produces a short exact sequence on $\Sigma_2$
$$
0\, \longrightarrow\, u^*TX\, \stackrel{dp}{\longrightarrow}\, u^*p^* T\mathbb{P}^2\,=\,
\widehat{u}^*T\mathbb{P}^2\, \longrightarrow\, {\mathcal T}\, \longrightarrow\, 0\, ,
$$
where ${\mathcal T}$ is a torsion sheaf supported on $D$ and the dimension of its stalk
at each point of $D$ is one. The vector bundle $T\mathbb{P}^2$ is generated by
its global sections and hence this holds also for $\widehat{u}^*T\mathbb{P}^2$. So we
have
$$
H^1(\Sigma_2,\, u^*TX) \,=\,H^1(\Sigma_2,\, \widehat{u}^*T\mathbb{P}^2)\, .
$$
In view of the second condition, from Lemma \ref{p2_genus2_regular} 
we know
that $H^1(\Sigma_2,\, \widehat{u}^*T\mathbb{P}^2)\,=\, 0$. Hence
$H^1(\Sigma_2,\, u^*TX) \,=\,0$.
\end{proof}

\begin{rem}
The second condition on $u$ in Lemma \ref{dp_regular_g2} is automatically satisfied in our case since the 
curve $u$ passes through $\delta_{\beta}-1$ generic points. 
\end{rem}

The case of $\mathbb{P}^1 \times \mathbb{P}^1$ follows immediately from Lemma \ref{p2_genus2_regular}. In particular we obtain the following Lemma: 

\begin{lmm}
\label{p1xp1_g2_regular}
Let $X$ be $\mathbb{P}^1 \times \mathbb{P}^1$ 
and 
$(\Sigma_2, j)$ a compact genus $2$ Riemann surface with a 
complex structure $j$. Let 
$u\,:\, \Sigma_2 \,\longrightarrow\, X$ be a holomorphic map representing the 
class $\beta := a e_1 + b e_2 \in H_2(X, \,\mathbb{Z})$, 
where $e_1$ and $e_2$ denote the class of $[\mathbb{P}^1] \times [\textnormal{pt}]$ and 
$[\textnormal{pt}] \times [\mathbb{P}^1]$ 
in $H_2(\mathbb{P}^1 \times \mathbb{P}^1; \mathbb{Z})$ respectively. 
Then 
$$
H^1(\Sigma_2,\, u^*TX) \,=\,0
$$
provided $a$ and $b$ are both greater than $2$. 
\end{lmm}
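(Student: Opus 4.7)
The plan is to exploit the product structure of $X = \mathbb{P}^1 \times \mathbb{P}^1$ to reduce the vanishing of $H^1(\Sigma_2, u^*TX)$ to two instances of Lemma \ref{p2_genus2_regular}. Let $p_i: X \to \mathbb{P}^1$ denote projection to the $i$-th factor. Since $TX$ splits holomorphically as $p_1^*T\mathbb{P}^1 \oplus p_2^*T\mathbb{P}^1$, pulling back by $u$ gives
\begin{align*}
u^*TX \,=\, (p_1\circ u)^*T\mathbb{P}^1 \oplus (p_2\circ u)^*T\mathbb{P}^1,
\end{align*}
so the vanishing of $H^1$ for $u^*TX$ reduces to the vanishing of $H^1$ for each summand separately.

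Next I would compute the degrees of the projected maps $v_i := p_i\circ u : \Sigma_2 \to \mathbb{P}^1$. Using the intersection numbers $e_1\cdot e_1 = 0 = e_2\cdot e_2$ and $e_1\cdot e_2 = 1$, one sees that the class of a fiber of $p_1$ is $e_2$ and the class of a fiber of $p_2$ is $e_1$. Hence $\deg(v_1) = \beta\cdot e_2 = a$ and $\deg(v_2) = \beta\cdot e_1 = b$. The hypothesis $a,b > 2$ in particular ensures that both $v_1$ and $v_2$ are non-constant holomorphic maps of degree strictly greater than $2$.

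Then I would invoke Lemma \ref{p2_genus2_regular} with $n=1$ applied to each $v_i$: since $\deg(v_i) > 2$, we get $H^1(\Sigma_2, v_i^*T\mathbb{P}^1) = 0$ for $i=1,2$. Combining this with the splitting above yields $H^1(\Sigma_2, u^*TX) = 0$, as required.

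I do not foresee a substantive obstacle here; the argument is essentially a one-line reduction once the splitting $TX = p_1^*T\mathbb{P}^1 \oplus p_2^*T\mathbb{P}^1$ is written down. The only mild thing to double-check is the identification of $\deg(v_i)$ with the appropriate intersection product, but this is immediate from the standard basis of $H_2(\mathbb{P}^1\times\mathbb{P}^1,\mathbb{Z})$. In this sense the proof is strictly easier than that of Lemma \ref{dp_regular_g2}, since no exceptional-divisor torsion sheaf or auxiliary blow-down morphism is needed.
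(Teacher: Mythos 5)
Your proof is correct and is exactly the elaboration of the paper's one-line argument, which simply states that the result ``follows immediately from Lemma \ref{p2_genus2_regular}'': the intended reduction is precisely the splitting $u^*TX = v_1^*T\mathbb{P}^1 \oplus v_2^*T\mathbb{P}^1$ with $\deg(v_1)=a$, $\deg(v_2)=b$, followed by two applications of that lemma with $n=1$.
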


\begin{proof} Follows immediately from Lemma \ref{p2_genus2_regular}.  
\end{proof}

\begin{rem}
Lemmas \ref{dp_regular_g2} and \ref{p1xp1_g2_regular} imply in particular, that the complex structure on the del-Pezzo surfaces 
are genus $2$ regular.   
\end{rem}

\section*{Acknowledgements}
The second author is grateful to Hannah Markwig and Yoav Len for fruitful discussions on this subject and informing us about what is known 
in this subject from the point of view of tropical geometry. He is also grateful to TIFR (Mumbai) and ICTS (Bangalore) for their hospitality where 
a major part of the project took place. The third author would like to thank NISER for its hospitality where a major part of the project took place.  
We are also grateful to Vamsi Pingali for writing the C++ program. The first author is partially supported by a J. C. Bose Fellowship. 
We are also very grateful to the referee for giving us helpful comments on the earlier version of this paper


\end{document}